\newtheorem{thm}{Theorem}[section]
\newtheorem{cor}[thm]{Corollary}
\newtheorem{lemma}[thm]{Lemma}
\newtheorem{prop}[thm]{Proposition}
\newtheorem{claim}[thm]{Claim}
\newtheorem{conj}[thm]{Conjecture}
\theoremstyle{definition}
\newtheorem{defn}[thm]{Definition}
\newtheorem{rmk}[thm]{Remark}
\numberwithin{equation}{section}
\newcommand{\RR}{\mathbb{R}}
\newcommand{\QQ}{\mathbb{Q}}
\newcommand{\ZZ}{\mathbb{Z}}
\renewcommand{\epsilon}{\varepsilon}
\newcommand{\spn}{\mathrm{span}}
\newcommand{\rad}{\mathrm{rad}}
\newcommand{\real}{\mathrm{real}}
\newcommand{\infs}{\mathrm{inf}}
\begin{document}

\title[Stretch factors of orientation-reversing pseudo-Anosov maps]{Minimal stretch factors of orientation-reversing fully-punctured pseudo-Anosov maps}

\author{Erwan Lanneau}
\address{
UMR CNRS 5582,
Univ. Grenoble Alpes, CNRS, Institut Fourier, F-38000 Grenoble, France}
\email{erwan.lanneau@univ-grenoble-alpes.fr}

\author{Livio Liechti}
\address{Department of Mathematics, University of Fribourg, Chemin du Musée 23, 1700 Fribourg, Switzerland}
\email{livio.liechti@unifr.ch}

\author{Chi Cheuk Tsang}
\address{Département de mathématiques \\
Université du Québec à Montréal \\
201 President Kennedy Avenue \\
Montréal, QC, Canada H2X 3Y7}
\email{tsang.chi\_cheuk@uqam.ca}

\begin{abstract}
We show that the stretch factor $\lambda(f)$ of an orientation-reversing fully-punctured pseudo-Anosov map $f$ on a finite-type orientable surface $S$, with $-\chi(S) \geq 4$ and having at least two puncture orbits, satisfies the inequality $\lambda(f)^{-\chi(S)} \geq \sigma^2$, where $\sigma=1+\sqrt{2}$ is the silver ratio. 
We provide examples showing that this bound is asymptotically sharp. This extends previous results of Hironaka and Tsang to orientation-reversing maps. 
\end{abstract}

\maketitle

\section{Introduction} \label{sec:intro}

It is an open and notoriously difficult problem to find, on a given closed orientable surface of genus~$g\ge3$, the minimal stretch factor among all orientation-preserving pseudo-Anosov maps of the surface. This problem is equivalent to finding the length of the shortest closed geodesic on the moduli space of Riemann surfaces of genus~$g$, with respect to the Teichmüller metric. For a survey of results, we refer to the introduction of~\cite{LS20}.

Recently, Hironaka and Tsang significantly advanced our understanding of the analogous problem in the fully-punctured case, that is, when the singularities of the invariant 
foliations of $f$ lies on the punctures of $S$, under the additional hypothesis that there are at least two puncture orbits. More precisely, for all even Euler characteristics, they determined the precise minimal stretch factor, and showed that for all Euler characteristics, the normalized stretch factor is always bounded from below by~$\mu^4\approx 6.854$, where~$\mu$ is the golden ratio~\cite[Theorem 1.1]{HT22}. 

Furthermore, Tsang determined the shape of the set of normalized stretch factors of fully-punctured orientation-preserving pseudo-Anosov maps~\cite[Theorem 1.6]{Tsa23}: it is the union of six isolated points and a dense subset of~$[\mu^4, \infty)$. 

In this article, we explore analogous questions for orientation-reversing pseudo-Anosov maps. Let $\sigma=1+\sqrt{2}$ be the silver ratio. Our first result is the following.

\begin{thm} \label{thm:mainthm}
Let~$f:S \to S$ be an orientation-reversing fully-punctured pseudo-Anosov map on a finite-type orientable surface. Suppose $-\chi(S) \geq 4$ and $f$ has at least two puncture orbits, then the normalized stretch factor of~$f$ satisfies
$$\lambda(f)^{-\chi(S)} \geq \sigma^2 \approx 5.828.$$
Moreover, this inequality is asymptotically sharp, in the sense that for every integer~$k \geq 2$, there exists an orientation-reversing fully-punctured pseudo-Anosov map~$f_k:S_k \to S_k$, where~$\chi(S_k)=-2k$, such that~$\lim_{k \to \infty} \lambda(f_k)^{-\chi(S_k)} = \sigma^2$.
\end{thm}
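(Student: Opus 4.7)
The strategy is to follow and extend the framework of~\cite{HT22, Tsa23}, in which the stretch factor $\lambda(f)$ is realized as the Perron-Frobenius eigenvalue of an incidence matrix associated to an $f$-invariant train track $\tau$ carrying the unstable foliation. In the fully-punctured setting with at least two puncture orbits, such a train track admits a controlled combinatorial structure with the number of branches bounded linearly in $-\chi(S)$, which in turn controls the possible Perron-Frobenius eigenvalues.

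For the lower bound, the crucial new ingredient is the orientation-reversing hypothesis. I expect this to impose an involutive symmetry on the train track picture, for instance by pairing up certain branches or switches via the local orientation swap at the singularities, so that the incidence matrix $M$ equivariantly decomposes with respect to an involution $\iota$ of the branch set. The silver polynomial $x^2 - 2x - 1$, whose largest root is $\sigma$, should appear as the minimal characteristic polynomial compatible with these equivariance constraints on an irreducible block of $M$; squared, this bounds the relevant block of $M^{-\chi(S)}$ from below by $\sigma^2$.

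For asymptotic sharpness, I would construct an explicit family $f_k: S_k \to S_k$ of orientation-reversing fully-punctured pseudo-Anosov maps on surfaces with $\chi(S_k)=-2k$. A natural template comes from Thurston--Veech or Penner-type constructions applied to a pair of filling multicurves equipped with an orientation-reversing involution, and then composing the resulting orientation-preserving pseudo-Anosov with the involution. For the incidence matrices to approach the silver bound, the underlying train track data should resemble a long chain whose companion-type matrix degenerates, in normalized spectral radius, to the $2 \times 2$ silver block. One would then verify by direct computation, perhaps using continued-fraction-style recursions attached to the silver ratio's expansion $[2;2,2,\ldots]$, that $\lambda(f_k)^{2k} \to \sigma^2$.

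The main obstacle is certainly the lower bound, and in particular the identification of $\sigma^2$ as the correct constant. Note that the naive approach of applying~\cite{HT22} to the orientation-preserving square $f^2$ only yields $\lambda(f)^{-\chi(S)} \geq \mu^2 \approx 2.618$, which is strictly weaker than the claim, so the orientation-reversing hypothesis must be exploited in a genuinely non-trivial way. Translating this hypothesis into a quantitative restriction on the incidence matrix, matching it to the asymptotic examples, and ruling out smaller characteristic polynomials than $x^2 - 2x - 1$ via a careful case analysis of the admissible train track combinatorics, is where the substantive work lies.
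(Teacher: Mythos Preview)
Your proposal has a genuine gap at the heart of the lower bound argument: the way you translate the orientation-reversing hypothesis into a matrix constraint is not correct, and the mechanism by which $\sigma^2$ arises is different from what you suggest.

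The orientation-reversing hypothesis does \emph{not} give an involution on the branch set of the train track or an equivariant block decomposition of the incidence matrix. What it gives is that the transition matrix is \emph{anti}-symplectic with respect to the Thurston form $\omega$ on the weight space (it sends $\omega$ to $-\omega$), and hence its characteristic polynomial is \emph{skew-reciprocal up to cyclotomic factors}: the non-cyclotomic roots are invariant under $t\mapsto -t^{-1}$. This is a spectral symmetry, not a block-structure symmetry, and the ``up to cyclotomic factors'' qualifier is essential---there are explicit examples where the real transition matrix fails to be genuinely skew-reciprocal, so no clean $2\times 2$ ``silver block'' can be isolated. Consequently the constant $\sigma^2$ does not come from a minimal $x^2-2x-1$ block; it comes from analyzing which primitive integer matrices of size $n\geq 4$ can simultaneously satisfy (i) $\rho(A)^n<\sigma^2$ and (ii) skew-reciprocity up to cyclotomic factors. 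Step (i) forces the characteristic polynomial into one of McMullen's five small curve-graph families ($2A_1$, $3A_1$, $4A_1$, $5A_1$, $A_2^*$), and the paper then carries out a long case analysis---using a mod-$2$ coefficient symmetry, explicit quotienting by cyclotomic factors, and trigonometric identities for roots of unity---to rule each family out. None of this is captured by your equivariance picture.

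Your sharpness sketch is closer to the mark: the paper's examples are indeed half-twists on a cyclically punctured annulus composed with a reflection, lifted to a double branched cover where the transition matrix is a companion matrix with characteristic polynomial $t^{2k}-t^{k\pm 1}-t^{k\mp 1}-1$ (or $k\pm 2$ for $k$ odd), whose normalized largest root tends to $\sigma^2$. Your continued-fraction heuristic for $\sigma=[2;2,2,\ldots]$ is suggestive but is not what drives the computation.
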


Sharpness in ~\Cref{thm:mainthm} follows from explicit examples, see~\Cref{prop:sharpnesseg}. For all even Euler characteristics~$-2k$,~$k\ge2$, we construct pseudo-Anosov maps that have as their stretch factor the minimal spectral radius among all skew-reciprocal primitive matrices~$A\in\mathrm{GL}_{2k}(\ZZ)$.
For~$k\ne3$, this stretch factor is smaller than the lower bound for the orientation-preserving case determined by Hironaka and Tsang~\cite[Theorem~1.9]{HT22}. We obtain the following corollary, comparing the stretch factors of orientation-preserving and orientation-reversing fully-punctured pseudo-Anosov maps on surfaces with even Euler characteristic. 

\begin{cor}\label{cor:maincor}
Fix any positive integer~$k\ge1$,~$k\ne3$. Among all fully-punctured pseudo-Anosov maps with at least two puncture orbits on surfaces with even Euler characteristic~$-2k$, the orientation-reversing ones realize a smaller stretch factor than the orientation-preserving ones. 
\end{cor}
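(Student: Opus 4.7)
The proof plan is a direct comparison of explicit stretch factors. For each $k \geq 1$ with $k \neq 3$, I would exhibit an orientation-reversing fully-punctured pseudo-Anosov map on a surface of Euler characteristic $-2k$ whose stretch factor is strictly less than the minimum stretch factor $\lambda_k^{+}$ achieved by any orientation-preserving fully-punctured pseudo-Anosov map with at least two puncture orbits on the same surface, the latter being explicitly determined by~\cite[Theorem~1.9]{HT22}.

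For each $k \geq 2$, the candidate orientation-reversing map is the sharpness example $f_k$ from \Cref{prop:sharpnesseg}, whose stretch factor is the Perron root of a specific skew-reciprocal primitive matrix in $\mathrm{GL}_{2k}(\ZZ)$. For $k = 1$, where \Cref{thm:mainthm} does not apply, a separate low-complexity orientation-reversing pseudo-Anosov map on a surface with $\chi = -2$ will be supplied. In all cases the statement reduces to a single explicit inequality between two algebraic numbers.

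For large $k$, this inequality is immediate: by the asymptotic sharpness in \Cref{thm:mainthm}, $\lambda(f_k)^{2k} \to \sigma^2 \approx 5.828$, whereas by~\cite[Theorem~1.9]{HT22} one has $(\lambda_k^{+})^{2k} \geq \mu^4 \approx 6.854 > \sigma^2$. Hence $\lambda(f_k) < \lambda_k^{+}$ for every sufficiently large $k$. The remaining small values of $k$ must be handled by direct numerical comparison, since on both sides the stretch factor is the Perron root of a concretely given characteristic polynomial.

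The main obstacle is therefore precisely this finite case check: one must verify the strict inequality for each small $k \neq 3$ and confirm that $k = 3$ is indeed the unique value at which it fails. Identifying this exceptional value, and thereby justifying its exclusion from the statement, is the key computational step of the argument; once this is done the corollary follows by combining \Cref{prop:sharpnesseg} with~\cite[Theorem~1.9]{HT22}.
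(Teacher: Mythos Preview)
Your proposal is correct and follows essentially the same approach as the paper: use the examples $f_k$ from \Cref{prop:sharpnesseg} for $k\ge 2$ and a separate small example for $k=1$, then compare against the orientation-preserving minimum from \cite[Theorem~1.9]{HT22}. Two minor differences are worth noting. First, the paper does not split into ``large $k$ via asymptotics plus small $k$ by hand''; it simply asserts that the stretch factor of $f_k$ (which equals the minimal spectral radius among skew-reciprocal primitive matrices in $\mathrm{GL}_{2k}(\ZZ)$) is below the orientation-preserving bound for every $k\ne 3$, this comparison having already been carried out in \cite{Lie23}. Second, the paper is explicit about the $k=1$ example: the orientation-reversing map on the four-punctured sphere from \Cref{rmk:2A_1lowdeg} has normalized stretch factor $\mu^2<\mu^4$.
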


The only case of \Cref{cor:maincor} not covered by \Cref{prop:sharpnesseg} is the case~$k=1$. However, in this case the result follows from a direct comparison: as per \cite{HT22}, the smallest normalized stretch factor in the orientation-preserving case is~$\mu^4$, while there exists an orientation-reversing map on the four-punctured sphere with normalized stretch factor~$\mu^2$, see \Cref{rmk:2A_1lowdeg}.

In the case~$k=3$, the comparison might very well go the other way around. Indeed, this would be the case if our \Cref{conj:exactvalue} about the minimal normalized stretch factors of orientation-reversing fully-punctured pseudo-Anosov maps holds. 

The comparison in \Cref{cor:maincor} agrees with the comparison of the minimal spectral radii of reciprocal and skew-reciprocal primitive matrices~$A\in\mathrm{GL}_{2k}(\ZZ)$ presented in \cite[Theorem~1.1]{Lie23}. Interestingly, by work of Liechti~\cite[Theorem~2 and Proposition~14]{Lie19}, extending this comparison to the class of all  matrices~$A\in\mathrm{GL}_{2k}(\ZZ)$ obtained from the action on the first homology of closed surfaces induced by orientation-preserving and orientation-reversing pseudo-Anosov maps would yield a new proof of a theorem of Dimitrov~\cite{Dim19}, which for a long time was known as the conjecture of Schinzel and Zassenhaus~\cite{SZ65}. In this context, our \Cref{cor:maincor} can be seen as a geometric version of this proof idea. 

Concerning the set of normalized stretch factors of orientation-reversing fully-punctured pseudo-Anosov maps, we have the following result. 

\begin{thm}\label{thm:set}
The smallest three elements of the set of normalized stretch factors of orientation-reversing fully-punctured pseudo-Anosov maps are~$\mu, \sigma$ and~$\mu^2$. Furthermore, the set of normalized stretch factors of orientation-reversing fully-punctured pseudo-Anosov maps contains a dense subset of~$[\sigma^2, \infty)$. 
\end{thm}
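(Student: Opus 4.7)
\textbf{Proof plan for \Cref{thm:set}.} The statement has two independent parts, treated separately: (I) identifying the three smallest normalized stretch factors as $\mu, \sigma, \mu^2$, and (II) establishing density of the set in $[\sigma^2, \infty)$.

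\emph{Part I.} First one exhibits explicit examples realizing each of $\mu, \sigma, \mu^2$. The once-punctured torus provides the lowest two values: pA maps on it correspond to Anosov elements of $\mathrm{GL}_2(\ZZ)$, and the orientation-reversing ones are those with determinant $-1$. The matrices $\begin{pmatrix} 1 & 1 \\ 1 & 0 \end{pmatrix}$ and $\begin{pmatrix} 2 & 1 \\ 1 & 0 \end{pmatrix}$ are Anosov with stretch factors $\mu$ and $\sigma$ respectively; since $-\chi(S)=1$ these equal their normalized stretch factors. The value $\mu^2$ is realized by the map on the four-punctured sphere of \Cref{rmk:2A_1lowdeg}.

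Next, one must rule out any normalized stretch factor strictly below $\mu$, strictly between $\mu$ and $\sigma$, or strictly between $\sigma$ and $\mu^2$. By \Cref{thm:mainthm}, OR fully-punctured pA's with $-\chi(S) \geq 4$ and at least two puncture orbits have normalized stretch factor $\geq \sigma^2 > \mu^2$, so no such maps can contribute small values. The remaining cases are (a) surfaces with $-\chi(S) \leq 3$, and (b) surfaces with exactly one puncture orbit. Case (a) is a finite enumeration problem: for $-\chi(S) = 1$, classifying Anosov determinant $-1$ elements of $\mathrm{GL}_2(\ZZ)$ by trace shows that $\mu$ and $\sigma$ are the only stretch factors below $\mu^2$; for $-\chi(S) \in \{2,3\}$, a direct train track or graph map analysis on each topological type should rule out new small values. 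Case (b) would be handled by adapting the argument of \Cref{thm:mainthm}: the two-puncture-orbit hypothesis presumably enters at a specific combinatorial step, and revisiting that step should still yield a lower bound exceeding $\mu^2$ when only one orbit is present. This enumeration is the main obstacle, since it requires an ad hoc treatment of several low-complexity cases rather than a single clean bound; verifying that nothing new appears between $\sigma$ and $\mu^2$ on surfaces with $-\chi(S)\in\{2,3\}$ looks particularly delicate.

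\emph{Part II.} I would adapt the construction of \cite{Tsa23}, which produced the analogous density in $[\mu^4, \infty)$ for orientation-preserving maps. Starting from the asymptotically sharp family $\{f_k\}$ of \Cref{prop:sharpnesseg}, one composes each $f_k$ with varying powers of a Dehn twist along a carefully chosen simple closed curve disjoint from the singular set; the resulting OR pA maps have stretch factors equal to spectral radii of nonnegative integer matrices varying continuously with the twist parameter. A Perron--Frobenius argument combined with an intermediate value theorem on the Perron eigenvalue then gives a sequence of normalized stretch factors densely filling $[\sigma^2, \infty)$. This step is expected to be a fairly direct transcription of the orientation-preserving argument, once the analogue of the asymptotically optimal family has been exhibited in \Cref{prop:sharpnesseg}.
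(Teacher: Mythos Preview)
Your Part~II is close enough to what the paper does: density in $[\sigma^2,\infty)$ is indeed obtained from the family $f_k$ of \Cref{prop:sharpnesseg} together with the fibered-cone arguments of \cite[Section~6.1]{Tsa23}.

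Part~I, however, has a genuine gap. Your plan is to invoke \Cref{thm:mainthm} for $-\chi(S)\geq 4$ and then mop up the cases $-\chi(S)\leq 3$ and ``one puncture orbit'' by hand. The second of these is not a side case: the two-puncture-orbit hypothesis is exactly what makes the whole machinery of standardly embedded train tracks work (Definition~\ref{defn:standardlyembedded} requires both $\partial_I\tau$ and $\partial_O\tau$ nonempty, and this partition comes from two distinct puncture orbits). So ``revisiting that step'' is not a tweak; with a single puncture orbit the construction behind \Cref{thm:standardlyembeddedtt}, and hence the entire proof of \Cref{thm:mainthm}, collapses. You would be left with infinitely many surfaces to analyze with no tool.

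The paper bypasses this completely with a different idea: if $f$ is orientation-reversing with normalized stretch factor $<\mu^2$, then $f^2$ is an \emph{orientation-preserving} fully-punctured pseudo-Anosov map with normalized stretch factor $<\mu^4$. By \cite{Tsa23} there are exactly six such values, all explicitly known together with the surfaces on which they occur. One then checks, value by value, whether an orientation-reversing square root exists. Three do ($\mu,\sigma,\mu^2$, realized as you describe). The remaining quadratic values $\tfrac{4+\sqrt{12}}{2}$ and $\tfrac{5+\sqrt{21}}{2}$ are ruled out because their square roots have degree-$4$ minimal polynomials, hence cannot arise from $\mathrm{GL}_2(\ZZ)$ on $S_{1,1}$. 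The two non-quadratic values (Lehmer's number on $S_{5,1}$ and $|\mathrm{LT}_{1,2}|$ on $S_{2,1}$) are Salem, so their Galois conjugates lie on the unit circle; by \cite[Theorem~1.10]{LS20} such numbers can never be the stretch factor of an orientation-reversing pseudo-Anosov map. This squaring reduction, together with the Galois-conjugate obstruction from \cite{LS20}, is the missing idea in your proposal.
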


\textbf{Outline of the proof of \Cref{thm:mainthm} and  \Cref{thm:set}}

For~\Cref{thm:mainthm}, we follow the approach by Hironaka and Tsang~\cite{HT22}. They develop the theory of standardly embedded train tracks, and study the action on the real edges of the train track induced by the pseudo-Anosov map. In the orientation-preserving case, this allows one to reduce the problem to the study of minimal spectral radii of reciprocal primitive matrices, which are well-known by work of McMullen~\cite[Theorem~1.1]{McM15}, at least in even dimensions. 

Initially, our hope was that the adaptation of McMullen’s result to skew-reciprocal matrices, by Liechti~\cite[Theorem~1.5]{Lie23}, directly allows us to conclude an analogous result for orientation-reversing pseudo-Anosov maps. However, it turns out that in this case the action induced on the real edges need not be skew-reciprocal, but is instead skew-reciprocal up to cyclotomic factors: the eigenvalues are invariant under the transformation~$t\mapsto -t^{-1}$, except for possibly some roots of unity. See~\Cref{rmk:skewreciprocalfail} for an explicit example.

The main technical contribution of this article is to extend certain cases of the analysis of minimal spectral radii of skew-reciprocal primitive matrices, carried out by Liechti~\cite{Lie23}, to the more general class of matrices that are skew-reciprocal up to cyclotomic factors. 

For the proof of~\Cref{thm:set}, we study which elements in the set of normalized stretch factors of orientation-preserving fully-punctured pseudo-Anosov maps, determined by Tsang~\cite{Tsa23}, admit an orientation-reversing square root. To this end, we use a theorem by Strenner and Liechti~\cite[Theorem~1.10]{LS20}, stating that the Galois conjugates of the stretch factor of an orientation-reversing pseudo-Anosov map cannot lie on the unit circle. 

However, our approach is not practicable for analyzing the dense subset of normalized stretch factors of orientation-preserving maps in the interval~$[\mu^4, \sigma^4)$. This is the reason why we are currently unable to describe the normalized stretch factors of orientation-reversing maps in~$(\mu^2,\sigma^2)$. In particular, it remains an open problem to determine whether~$\sigma^2$ is the minimal accumulation point of the set of normalized stretch factors of fully-punctured orientation-reversing pseudo-Anosov maps. 

\textbf{Organization.} 
In~\Cref{sec:background} we introduce the necessary background material on pseudo-Anosov maps and standardly embedded train tracks. In~\Cref{sec:ncsr} we discuss skew-reciprocity up to cyclotomic factors and show that the action of an orientation-reversing pseudo-Anosov map induced on the real edges of a standardly embedded train track is of this kind. The proof of~\Cref{thm:mainthm} occupies the two sections that follow: in~\Cref{section:curvegraphanalysis} we deduce the lower bounds, and in~\Cref{sec:sharpness} we describe examples providing the asymptotic sharpness. In \Cref{sec:thmsetproof} we provide a short proof of \Cref{thm:set}. Finally, in~\Cref{sec:questions}, we discuss some open questions.

\textbf{Acknowledgements.}
The first author would like to thank the University of Fribourg for excellent working conditions. 
This project was completed while the third author is a CRM-ISM postdoctoral fellow based at CIRGET. He would like to thank the center for its support. 
All three authors would like to thank the anonymous referee for their suggestions, which improved the readability of the paper.

\section{Background}
\label{sec:background}

In this section, we recall some background material from \cite{HT22}. The main goal is to explain \Cref{thm:standardlyembeddedtt}, which is essentially a summary of results in \cite[Section 3]{HT22} but allowing for orientation-reversing maps.

\subsection{Pseudo-Anosov maps}

In this paper, a \textbf{finite-type surface} will mean a closed oriented surface with finitely many points, which we call the \textbf{punctures}, removed. 

\begin{defn} \label{defn:pamap}
A homeomorphism $f$ on a finite-type surface $S$ is said to be \textbf{pseudo-Anosov} if there exists a pair of singular measured foliations $(\ell^s,\mu^s)$ and $(\ell^u,\mu^u)$ such that:
\begin{enumerate}
    \item Away from a finite collection of \textbf{singular points}, which includes the punctures, $\ell^s$ and $\ell^u$ are locally conjugate to the foliations of $\mathbb{R}^2$ by vertical and horizontal lines respectively.
    \item Near a singular point, $\ell^s$ and $\ell^u$ are locally conjugate to either
    \begin{itemize}
        \item the pull back of the foliations of $\mathbb{R}^2$ by vertical and horizontal lines by the map $z \mapsto z^{\frac{n}{2}}$ respectively, for some $n \geq 3$, or
        \item the pull back of the foliations of $\mathbb{R}^2 \backslash \{(0,0)\}$ by vertical and horizontal lines by the map $z \mapsto z^{\frac{n}{2}}$ respectively, for some $n \geq 1$.
    \end{itemize}
    \item $f_* (\ell^s,\mu^s) = (\ell^s,\lambda^{-1} \mu^s)$ and $f_* (\ell^u,\mu^u) = (\ell^u,\lambda \mu^u)$ for some $\lambda=\lambda(f)>1$.
\end{enumerate} 
We call $(\ell^s,\mu^s)$ and $(\ell^u,\mu^u)$ the \textbf{stable} and \textbf{unstable} measured foliations respectively. We call $\lambda(f)$ the \textbf{stretch factor} of $f$ and call $\lambda(f)^{|\chi(S)|}$ the \textbf{normalized stretch factor} of $f$.
\end{defn}

\begin{defn} \label{defn:fullypunctured}
$f$ is said to be \textbf{fully-punctured} if the set of singular points equals the set of punctures. In this case, we will denote the set of punctures by $\mathcal{X}$. Note that $f$ acts on $\mathcal{X}$ by some permutation, hence it makes sense to talk about the orbits of punctures under the action of $f$, or \textbf{puncture orbits} for short.
\end{defn}

\subsection{Standardly embedded train tracks}

A \textbf{train track} $\tau$ on a finite-type surface $S$ is an embedded finite graph with a partition of half-edges incident to each vertex into two nonempty subsets, called the \textbf{sides} of the vertex. We refer to the data of the partition as the \textbf{smoothing}, and incorporate this data by choosing some tangent line at each vertex $v$, and arranging the half-edges in the two sides to be tangent to the two sides of the line.

A \textbf{boundary component} of $\tau$ is a boundary component of a complementary region of $\tau$ in $S$. In this paper, it will always be the case that $\tau$ is a deformation retract of $S$, hence the boundary components of $\tau$ are in canonical one-to-one correspondence with the punctures of $S$.
A boundary component is \textbf{$n$-pronged} if it has $n$ cusps, i.e. $n$ nonsmooth points.

\begin{defn} \label{defn:standardlyembedded}
Let $\partial \tau = \partial_I \tau \sqcup \partial_O \tau$ be a partition of the boundary components of $\tau$ into a nonempty set of \textbf{inner boundary components} and a nonempty set of \textbf{outer boundary components} respectively.

A train track $\tau$ is said to be \textbf{standardly embedded (with respect to $(\partial_I \tau, \partial_O \tau)$)} if its set of edges $E(\tau)$ can be partitioned into a set of \textbf{infinitesimal edges} $E_{\infs}(\tau)$ and a set of \textbf{real edges} $E_{\real}(\tau)$, such that:
\begin{itemize}
    \item The smoothing at each vertex is defined by separating the infinitesimal edges and the real edges.
    \item The union of infinitesimal edges is a disjoint union of cycles, which we call the \textbf{infinitesimal polygons}.
    \item The infinitesimal polygons are exactly the inner boundary components of $\tau$.
\end{itemize}
\end{defn}

Let $\tau$ and $\tau'$ be train tracks on $S$. A \textbf{train track map} is a map $f:S \to S$ that sends vertices of $\tau$ to vertices of $\tau'$ and smooth edge paths of $\tau$ to smooth edge paths of $\tau'$. 

Two examples of train track maps are the \textbf{subdivision move} on an edge $e$, and the \textbf{elementary folding move} on a pair of edges $(e_1,e_2)$ that are adjacent on one side of a vertex. We pictorially recall the definitions of these in \Cref{fig:elementarymoves} top and bottom, and refer the reader to \cite[Definitions 3.14 and 3.15]{HT22} for the precise descriptions.

\begin{figure}
    \centering
    \fontsize{8pt}{8pt}\selectfont
\begingroup%
  \makeatletter%
  \providecommand\color[2][]{%
    \errmessage{(Inkscape) Color is used for the text in Inkscape, but the package 'color.sty' is not loaded}%
    \renewcommand\color[2][]{}%
  }%
  \providecommand\transparent[1]{%
    \errmessage{(Inkscape) Transparency is used (non-zero) for the text in Inkscape, but the package 'transparent.sty' is not loaded}%
    \renewcommand\transparent[1]{}%
  }%
  \providecommand\rotatebox[2]{#2}%
  \newcommand*\fsize{\dimexpr\f@size pt\relax}%
  \newcommand*\lineheight[1]{\fontsize{\fsize}{#1\fsize}\selectfont}%
  \ifx\svgwidth\undefined%
    \setlength{\unitlength}{223.32376315bp}%
    \ifx\svgscale\undefined%
      \relax%
    \else%
      \setlength{\unitlength}{\unitlength * \real{\svgscale}}%
    \fi%
  \else%
    \setlength{\unitlength}{\svgwidth}%
  \fi%
  \global\let\svgwidth\undefined%
  \global\let\svgscale\undefined%
  \makeatother%
  \begin{picture}(1,0.5082033)%
    \lineheight{1}%
    \setlength\tabcolsep{0pt}%
    \put(0,0){\includegraphics[width=\unitlength,page=1]{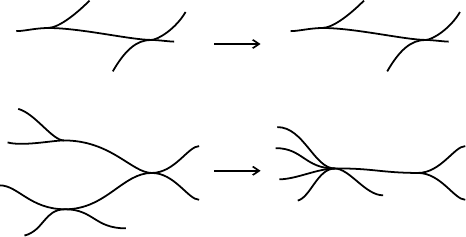}}%
    \put(0.2221499,0.44673983){\color[rgb]{0,0,0}\makebox(0,0)[lt]{\lineheight{1.25}\smash{\begin{tabular}[t]{l}$e$\end{tabular}}}}%
    \put(0.22329358,0.06094524){\color[rgb]{0,0,0}\makebox(0,0)[lt]{\lineheight{1.25}\smash{\begin{tabular}[t]{l}$e_2$\end{tabular}}}}%
    \put(0.22329164,0.1987999){\color[rgb]{0,0,0}\makebox(0,0)[lt]{\lineheight{1.25}\smash{\begin{tabular}[t]{l}$e_1$\end{tabular}}}}%
    \put(0,0){\includegraphics[width=\unitlength,page=2]{elementarymoves.pdf}}%
  \end{picture}%
\endgroup%

    \caption{Top: The subdivision move on $e$. Bottom: The elementary folding move on $(e_1,e_2)$.}
    \label{fig:elementarymoves}
\end{figure}

The \textbf{transition matrix} of a train track map $f$ is the matrix $f_* \in M_{E(\tau') \times E(\tau)}(\mathbb{Z}_{\geq 0})$ whose $(e',e)$-entry is given by the number of times $f(e)$ passes through $e'$.

\begin{thm} \label{thm:standardlyembeddedtt}
Let $f:S \to S$ be a fully-punctured pseudo-Anosov map with at least two puncture orbits. Then $f$ is homotopic to a train track map on a standardly embedded train track $\tau$. This train track map, which we will denote by $f$ as well, has the following properties:
\begin{itemize}
    \item $f$ is a composition $\sigma f_n \cdots f_1$, where each $f_i$ is a subdivision move or an elementary folding move, and $\sigma$ is an isomorphism of train tracks.
    \item The transition matrix of $f$ is of the form 
    $$f_* = \begin{bmatrix}
    P & * \\
    0 & f_*^\real
    \end{bmatrix}$$
    where $P$ is a permutation matrix and $f_*^\real$ is a $|\chi(S)|$-by-$|\chi(S)|$ primitive matrix whose spectral radius equals the stretch factor of $f$.
\end{itemize}
\end{thm}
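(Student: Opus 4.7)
The plan is to follow the construction in [HT22, Section 3] almost verbatim, checking at each step that nothing in the argument requires $f$ to preserve orientation. The only place where orientation has a chance to enter is how $f$ acts on the cyclic structure of each infinitesimal polygon, but this merely affects the direction in which the polygon's edges get permuted; in either direction one obtains a bona fide permutation of $E_{\infs}(\tau)$.

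For the construction of $\tau$, I would use the assumption of at least two puncture orbits to partition $\mathcal{X} = \mathcal{X}_I \sqcup \mathcal{X}_O$ into two nonempty $f$-invariant subsets (each a union of $f$-orbits). At each $x \in \mathcal{X}_I$, place a small infinitesimal polygon whose vertices correspond to the prongs of the unstable foliation $\ell^u$ at $x$. In the complement of small neighborhoods of $\mathcal{X}_I$, collapse $\ell^u$ transversely to obtain real edges attached to the infinitesimal polygons at their vertices. The resulting $\tau$ is a deformation retract of $S$, is standardly embedded in the sense of \Cref{defn:standardlyembedded} (with $\partial_O\tau$ nonempty thanks to $\mathcal{X}_O \neq \emptyset$), carries $\ell^u$, and has exactly $-\chi(S)$ real edges by an Euler-characteristic computation.

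Since $\tau$ carries $\ell^u$ and $f$ preserves $\ell^u$, the train track $f(\tau)$ is again carried by $\tau$, and a standard splitting-and-folding argument yields a factorization $f = \sigma f_n \cdots f_1$, with each $f_i$ a subdivision move or elementary folding move and $\sigma$ an isomorphism of train tracks. For the block form of $f_*$: because $f$ permutes the punctures, it permutes the inner boundary components, and hence sends each infinitesimal polygon bijectively to another, yielding the permutation block $P$ in the upper left and the zero block in the lower left. Real edges may map to trainpaths crossing both kinds of edges, which accounts for the $*$ block. The transverse measure $\mu^u$ assigns positive lengths to the real edges and zero weight to the infinitesimal edges (since these lie inside small neighborhoods of singularities), so its restriction to real edges is a positive Perron eigenvector of $f_*^\real$ with eigenvalue $\lambda(f)$; primitivity follows from the topological transitivity of pseudo-Anosov dynamics on $\ell^u$.

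The main obstacle — and the reason this theorem is worth restating rather than simply citing — is the careful bookkeeping of orientation-reversal. Concretely, I expect the subtle step to be checking that when $f$ reverses the cyclic orientation of some infinitesimal polygon, the splitting-and-folding decomposition still goes through locally at that polygon, and that the induced action on $E_{\infs}(\tau)$ remains an honest $\{0,1\}$-permutation matrix rather than a signed or block-permutation matrix. Neither verification needs any new ideas beyond those in [HT22], but both require re-examining the relevant lemmas in the orientation-reversing setting.
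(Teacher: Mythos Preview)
Your proposal is correct and follows essentially the same approach as the paper's proof: partition the punctures into inner and outer orbits, build $\tau$ from the foliation data with infinitesimal polygons at the inner punctures, count real edges via Euler characteristic, obtain the folding factorization from the carrying relation, and read off the block form, primitivity, and spectral radius from the permutation of infinitesimal polygons and the positive $\mu^u$-eigenvector. The paper phrases the construction via an explicit Markov partition into rectangles (built from stable and unstable stars) and obtains the folding sequence by interpolating those stars, but this is just a more concrete packaging of exactly what you describe.
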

\begin{proof}
This is essentially shown over \cite[Propositions 3.10, 3.12, 3.13, 3.21]{HT22}. Now, strictly speaking, \cite{HT22} only deals with orientation-preserving pseudo-Anosov maps, but this hypothesis is not used in the proofs of these propositions. In the following, we give a quick summary of these proofs for the reader's convenience and to demonstrate that they carry through in the orientation-reversing setting.

Recall from \cite[Definition 3.4]{HT22} that a \textbf{stable prong} at a puncture $x$ is a connected subset of a stable leaf that limits to $x$. A \textbf{stable star} at $x$ is a maximal disjoint union of prongs at $x$. In particular, a stable star at $x$ has $n$ connected components when $x$ is an $n$-pronged puncture. A \textbf{side} of a stable star is the union of two adjacent prongs.
An \textbf{unstable prong} and an \textbf{unstable star} at $x$ are defined similarly.

Fix a partition $\mathcal{X}=\mathcal{X}_I \sqcup \mathcal{X}_O$ of the set of punctures of $S$ into two nonempty $f$-invariant subsets.
We first construct a partition of $S$ into rectangles:
For each $x \in \mathcal{X}_O$, let $\sigma^u_x$ be the unstable star at $x$ for which each of its prongs has $\mu^s$-length 1. Then for each $x \in \mathcal{X}_I$, we construct a stable star $\sigma^s_x$ at $x$ by extending the stable prongs at $x$ until it bumps into some $\sigma^u_x$. Finally, for each $x \in \mathcal{X}_O$, we extend $\sigma^u_x$ by extending the prongs of $\sigma^u_x$ until it bumps into some $\sigma^s_x$. It is straightforward to check that each complementary region of $\bigcup_{x \in \mathcal{X}_I} \sigma^s_x \cup \bigcup_{x \in \mathcal{X}_O} \sigma^u_x$ is a rectangle. 

From this partition, we can construct the standardly embedded train track $\tau$ by taking its set of vertices to be in one-to-one correspondence with pairs of adjacent prongs of the stable stars $\sigma^s_x$. The infinitesimal edges of $\tau$ are taken to be in one-to-one correspondence with the prongs of $\sigma^s_x$, with their endpoints at the two vertices corresponding to the two pairs the prong lies in. The real edges of $\tau$ are taken to be in one-to-one correspondence with the rectangles of the partition, with their endpoints at the two vertices corresponding to pairs on which the two stable sides of the rectangle lies along.
We illustrate a local picture of this construction in \Cref{fig:partitiontott}.
In this paper, we draw stable leaves in red and unstable leaves in blue.

\begin{figure}
    \centering
    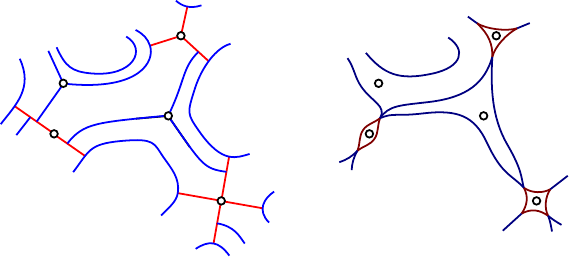
    \caption{A local example of the construction going from the partition by rectangles to the standardly embedded train track.}
    \label{fig:partitiontott}
\end{figure}

By construction, $\tau$ is a deformation retract of $S$, hence we can compute 
$$\chi(S) = \chi(\tau) = |V| - (|E_{\infs}|+|E_{\real}|) = |V| - (|V|+|E_{\real}|) = -|E_{\real}|.$$

Observe that $f^{-1}(\bigcup_{x \in \mathcal{X}_I} \sigma^s_x) \supset \bigcup_{x \in \mathcal{X}_I} \sigma^s_x$ and $f^{-1}(\bigcup_{x \in \mathcal{X}_O} \sigma^u_x) \subset \bigcup_{x \in \mathcal{X}_O} \sigma^u_x$. We can choose an increasing sequence of stable stars interpolating from $\bigcup_{x \in \mathcal{X}_I} \sigma^s_x$ to $f^{-1}(\bigcup_{x \in \mathcal{X}_I} \sigma^s_x)$ and a decreasing sequence of unstable stars interpolating from $\bigcup_{x \in \mathcal{X}_O} \sigma^u_x$ to $f^{-1}(\bigcup_{x \in \mathcal{X}_O} \sigma^u_x)$, so that at each stage the stable and unstable stars partition $S$ into rectangles. By repeating our construction above, we get a sequence of train tracks that differ by subdivision and elementary folding moves. We illustrate an example of a stage of this sequence in \Cref{fig:widerfolding}. Since the last train track in this sequence is constructed from $f^{-1}(\bigcup_{x \in \mathcal{X}_I} \sigma^s_x) \cup f^{-1}(\bigcup_{x \in \mathcal{X}_O} \sigma^u_x)$, $f$ induces a train track isomorphism from this train track back to $\tau$. This shows the first item in the statement.

\begin{figure}
    \centering
    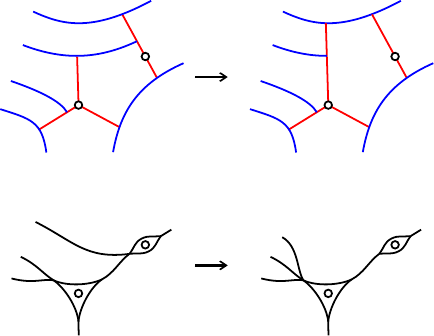
    \caption{As we expand the stable stars and shrink the unstable stars, the train track undergoes subdivision and elementary folding moves.}
    \label{fig:widerfolding}
\end{figure}

By construction, $f$ maps each infinitesimal edge to a single infinitesimal edge, hence if we list the infinitesimal edges in front of the real edges, the upper and lower left blocks of the transition matrix are as claimed in the second item of the statement.

It remains to show the claimed properties of the lower right block $f_*^\real$. 
To show primitiveness, notice that the $(e',e)$-entry of $f_*^k$ is given by the number of times $f^k(e)$ passes through $e'$. Using the fact that each leaf of $\ell^u$ is dense in $S$ (\cite[Proposition 9.6]{FLP79}), one can deduce that for each $e'$ there exists $k_{e'}$ such that the $(e',e)$-entry of $(f_*^\real)^{k_{e'}}$ is positive for each $e$. This implies that $(f_*^\real)^{\prod k_{e'}}$ is positive, hence $f_*^\real$ is primitive.

To find the spectral radius of $f_*$, we define an eigenvector $u$ by taking its $e$-entry to be the $\mu^u$-measure of the corresponding rectangle $R$. By definition, the $e$-entry of $f_*^\real u$ is the $f_*\mu^u$-measure of $R$, which is $\lambda$ times the $e$-entry of $u$. Hence $u$ is a $\lambda$-eigenvector of $f_*^\real$. Since $u$ is positive, by the Perron-Frobenius theorem, $\lambda$ is the spectral radius of $f_*^\real$. 
\end{proof}

For future convenience, we will refer to a standardly embedded train track $\tau$ as in \Cref{thm:standardlyembeddedtt} as a \textbf{$f$-invariant} standardly embedded train track, and we will refer to the block $f^\real_*$ as the \textbf{real transition matrix}.

\section{Skew-reciprocity up to cyclotomic factors}
\label{sec:ncsr}

Recall that a polynomial $p \in \ZZ[t]$ is \textbf{reciprocal} if its roots are invariant under the transformation $t\mapsto t^{-1}$.
A polynomial $q \in \ZZ[t]$ is \textbf{skew-reciprocal} if its roots are invariant under the transformation $t\mapsto -t^{-1}$.
A square matrix is said to be \textbf{reciprocal/skew-reciprocal} if its characteristic polynomial is reciprocal/skew-reciprocal, respectively.

In \cite{HT22}, it was shown that if $f$ is an orientation-preserving pseudo-Anosov map, then the transition matrix $f_*$ preserves a skew-symmetric bilinear form $\omega$, called the Thurston symplectic form. Despite its name, $\omega$ is in general degenerate, but by showing that $f_*$ acts on the radical $\rad(\omega)$ by a cyclotomic, thus reciprocal matrix, one can conclude that $f_*$ is reciprocal nevertheless.

Now if instead $f$ is orientation-reversing, then $f_*$ would send $\omega$ to $-\omega$. One might be tempted to conclude from this that $f_*$ would be a skew-reciprocal matrix. However, this is incorrect since it fails to account for the action of $f_*$ on $\rad(\omega)$. What turns out to be true instead is that $f_*$ satisfies a weaker property as follows:

\begin{defn} \label{defn:noncycloskewreciprocal}
A polynomial $r \in \ZZ[t]$ is \textbf{skew-reciprocal up to cyclotomic factors} if its roots, with possibly the exception of roots of unity, are invariant under the transformation $t\mapsto -t^{-1}$. Equivalently, the polynomial $r$ is the product of any number of cyclotomic factors and a skew-reciprocal polynomial. 

A square matrix is \textbf{skew-reciprocal up to cyclotomic factors} if its characteristic polynomial is skew-reciprocal up to cyclotomic factors.
\end{defn}

In \Cref{subsec:thurstonsympform}, we will explain why $f_*$ and $f^\real_*$ are skew-reciprocal up to cyclotomic factors. In \Cref{subsec:paritycondition}, we will show a simple necessary condition regarding the coefficients of a polynomial that is skew-reciprocal up to cyclotomic factors.

\subsection{The Thurston symplectic form} \label{subsec:thurstonsympform}

Let $\tau$ be a train track. The \textbf{weight space} $\mathcal{W}(\tau)$ of $\tau$ is the subspace of $\mathbb{R}^{E(\tau)}$ consisting of elements $(w_e)$ that satisfy $\sum_{e \in E^1_v} w_e=\sum_{e \in E^2_v} w_e$ at every vertex $v$, where $E^1_v \sqcup E^2_v$ is the smoothing at $v$.

\begin{defn} \label{defn:thurstonform}
Let $w, w' \in \mathcal{W}(\tau)$. We define
$$\omega(w,w') = \sum_{v \in V(\tau)} \sum_{\text{$e_1$ left of $e_2$}} (w_{e_1} w'_{e_2} - w_{e_2} w'_{e_1})$$
where the second summation is taken over all pairs of edges $(e_1,e_2)$ on a side of $v$ for which $e_1$ is on the left of $e_2$. 

Then $\omega$ is clearly a skew-symmetric bilinear form on $\mathcal{W}(\tau)$. We call $\omega$ the \textbf{Thurston symplectic form}.
\end{defn}

\begin{prop} \label{prop:weightspaceses}
Let $f$ be a fully-punctured pseudo-Anosov map with at least two puncture orbits. Let $\tau$ be an $f$-invariant standardly embedded train track. Then there exists a linear map $T$ and a permutation matrix $P \in M_{V(\tau) \times V(\tau)}(\mathbb{R})$ which fit into the commutative diagram
\begin{center}
\begin{tikzcd}
0 \arrow[r] & \mathcal{W}(\tau) \arrow[r] \arrow[d, "f_*"] & \mathbb{R}^{E(\tau)} \arrow[r, "T"] \arrow[d, "f_*"] & \mathbb{R}^{V(\tau)} \arrow[d, "P"] \\
0 \arrow[r] & \mathcal{W}(\tau) \arrow[r] & \mathbb{R}^{E(\tau)} \arrow[r, "T"] & \mathbb{R}^{V(\tau)} 
\end{tikzcd}
\end{center}
Moreover, if $f$ is orientation-reversing, then $f_*:\mathcal{W}(\tau) \to \mathcal{W}(\tau)$ sends $\omega$ to $-\omega$.
\end{prop}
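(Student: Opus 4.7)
The plan is to exhibit $T$ and $P$ concretely, verify left-exactness of the rows by unpacking definitions, check the right square by a coefficient-by-coefficient computation, and finally treat the orientation-reversing sign.

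For the map $T$, I set $T(w)_v$ equal to the sum of $w_e$ over (half-)edges $e$ at $v$ lying on the infinitesimal side of the smoothing, minus the analogous sum over the real side, where the partition is the canonical one coming from the standardly embedded structure. The switch condition at $v$ is precisely $T(w)_v=0$, so $\ker T=\mathcal{W}(\tau)$, which gives exactness of the rows at $\mathcal{W}(\tau)$ and at $\mathbb{R}^{E(\tau)}$. The inclusion $\mathcal{W}(\tau)\hookrightarrow\mathbb{R}^{E(\tau)}$ intertwines the two copies of $f_*$ because $f_*$ preserves the weight space (it sends a carried foliation to a carried foliation), so the left square commutes. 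For the right vertical map, since $f$ is a homeomorphism of $S$ that takes vertices of $\tau$ to vertices of $\tau$, it induces a permutation of $V(\tau)$, and I take $P$ to be the corresponding permutation matrix, characterised by $(Px)_v=x_{f^{-1}(v)}$.

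To verify commutativity of the right square, I will prove the identity $T(f_*w)_v=T(w)_{f^{-1}(v)}$ for each vertex $v$ by matching the coefficient of $w_{e'}$ for every edge $e'\in E(\tau)$. On the right, this coefficient equals $+1$ when $e'$ is infinitesimal and incident to $f^{-1}(v)$, $-1$ when it is real and incident to $f^{-1}(v)$ (with multiplicity $2$ for loops at $f^{-1}(v)$), and $0$ otherwise. On the left, the coefficient equals the number of infinitesimal half-edges at $v$ traversed by the smooth edge path $f(e')$, minus the number of real half-edges at $v$ traversed by the same path. The key point is that every interior passage of $f(e')$ through $v$ enters on one side of the smoothing and exits on the other (because the path is smooth), so such passages contribute $+1-1=0$ in total. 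Only the endpoints of $f(e')$ at $v$ can produce a nontrivial contribution, and such an endpoint arises precisely when $e'$ has $f^{-1}(v)$ as an endpoint. Moreover, the side on which $f(e')$ terminates at $v$ matches the side on which $e'$ terminates at $f^{-1}(v)$: by \Cref{thm:standardlyembeddedtt}, $f$ permutes the infinitesimal edges, hence preserves the infinitesimal/real dichotomy of tangent directions at every vertex. This gives the right square.

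For the orientation-reversal assertion, one approach is to invoke the fact that the Thurston symplectic form agrees with the algebraic intersection pairing of carried foliations on the oriented surface $S$; since algebraic intersection numbers are multiplied by the sign of the determinant of a homeomorphism on first homology, which is $-1$ for an orientation-reversing map, we get $\omega(f_*w,f_*w')=-\omega(w,w')$. Alternatively, one can verify this directly from the defining formula: the convention "$e_1$ left of $e_2$" at each vertex is inherited from the orientation of $S$, so pushing forward by $f$ swaps "left of" and "right of" in every summand and flips the overall sign of $\omega$.

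The hard part is the coefficient-matching at the heart of the commutative diagram: one must cleanly separate interior passages of $f(e')$ (which cancel by smoothness of the edge path) from terminal incidences (whose signs have to be tracked via the preservation of the infinitesimal/real partition by $f$). Once this bookkeeping is arranged, the two rows of the diagram line up coefficient by coefficient, and the orientation-reversal assertion follows from a standard naturality property of $\omega$.
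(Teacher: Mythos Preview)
Your construction of $T$ and $P$ and the verification of the left square and of $\ker T=\mathcal{W}(\tau)$ match the paper's proof exactly (up to an immaterial overall sign in $T$: the paper uses ``real minus infinitesimal'' rather than your ``infinitesimal minus real''). Your coefficient-by-coefficient argument for the right square is a fleshed-out version of the paper's ``straightforward to check''; the cancellation of interior passages by smoothness is the right mechanism. The one step you should justify more carefully is that a \emph{real} edge $e'$ has $f(e')$ terminating on the real side: knowing that $f$ permutes the infinitesimal edges tells you infinitesimal half-edges go to infinitesimal half-edges, but you also need that no real half-edge is sent to an infinitesimal one. This follows from the factorization $f=\sigma f_n\cdots f_1$ in \Cref{thm:standardlyembeddedtt} (each elementary move and the isomorphism $\sigma$ respect the infinitesimal/real partition of half-edges), which you are implicitly using.

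For the ``moreover'' statement the paper takes a different route from yours: it invokes the factorization $f=\sigma f_n\cdots f_1$, cites \cite[Lemmas~4.4 and~4.5]{HT22} to see that each subdivision and elementary fold preserves $\omega$, and then observes that the final train track isomorphism $\sigma$ flips left/right and hence negates $\omega$. Your alternative (b), arguing ``directly from the defining formula'', has a genuine gap: $f_*$ is not a mere relabelling of edges but involves folding, so plugging $f_*w,f_*w'$ into the vertex-by-vertex formula does not simply swap left and right in each summand. What makes this work is precisely the invariance of $\omega$ under the elementary moves, which is the content of the cited lemmas and is not visible from the formula alone. Your alternative (a), via the identification of $\omega$ with the algebraic intersection pairing, is a legitimate shortcut, but it imports a nontrivial fact not established in the paper; if you take this route you should cite a source for that identification.
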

\begin{proof}
The first statement follows from \cite[Proposition 4.2]{HT22}. Again, strictly speaking \cite{HT22} only deals with orientation-preserving maps, but this hypothesis is not used in the proof of this proposition. We give a quick summary of the proof to demonstrate this.

For each vertex $v$, we define a linear map $T_v:\mathbb{R}^{E(\tau)} \to \mathbb{R}$ by $T_v(w) = \sum_{e \in E_{v,\real}} w_e - \sum_{e \in E_{v,\infs}} w_e$.
The map $T$ is then defined by $T(w)=(T_v(w))$. By definition, $\mathcal{W}(\tau) = \ker T$. 

We then define 
$$P_{v',v}=
\begin{cases}
1 \text{ , if $f$ sends $v$ to $v'$} \\
0 \text{ , otherwise.}
\end{cases}$$
It is straightforward to check that $P T = T f_*$.

For the moreover statement, we use the factorization of $f$ as $\sigma f_n \cdots f_1$ from \Cref{thm:standardlyembeddedtt}. By \cite[Lemmas 4.4 and 4.5]{HT22}, each subdivision move and elementary folding move preserves $\omega$. Meanwhile, the train track isomorphism $\sigma$ is orientation-reversing if and only if $f$ is orientation-reversing. In this case, $\sigma$ sends $\omega$ to $-\omega$.
\end{proof}

We can further separate the action of $f_*$ on $\mathcal{W}(\tau)$ into two parts by considering the radical of $\omega$.

Let $c$ be an even-pronged boundary component of $\tau$. Label the sides of $\partial c$ by $I_1,...,I_n$ in a cyclic order. The \textbf{radical element} is the element $r_c$ of $\mathcal{W}(\tau)$ where we assign to each edge on $I_k$ a weight of $(-1)^k$.

\begin{prop} \label{prop:radicalreciprocal}
Let $f$ be a fully-punctured pseudo-Anosov map with at least two puncture orbits. Let $\tau$ be an $f$-invariant standardly embedded train track. Let $\partial_{\mathrm{even}} \tau$ be the set of even-pronged punctures. Then there exists a signed permutation matrix $P$ which fits into the commutative diagram
\begin{center}
\begin{tikzcd}
\mathbb{R}^{\partial_{\mathrm{even}} \tau} \arrow[r] \arrow[d, "P"] & \mathcal{W}(\tau) \arrow[r] \arrow[d, "f_*"] & \mathcal{W}(\tau)/\rad(\omega) \arrow[r] \arrow[d, "f_*"] & 0 \\
\mathbb{R}^{\partial_{\mathrm{even}} \tau} \arrow[r] & \mathcal{W}(\tau) \arrow[r] & \mathcal{W}(\tau)/\rad(\omega) \arrow[r] & 0 
\end{tikzcd}
\end{center}
Moreover, if $f$ is orientation-reversing, then $f_*:\mathcal{W}(\tau)/\rad(\omega) \to \mathcal{W}(\tau)/\rad(\omega)$ is skew-reciprocal.
\end{prop}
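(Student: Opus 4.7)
The plan is to identify $\rad(\omega)$ with the span of the radical elements, construct the signed permutation $P$ from the action of $f$ on these elements, and then deduce skew-reciprocity from the fact that $f_*$ negates $\omega$.

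First I would check that each $r_c$ lies in $\rad(\omega)$: this is a local computation at each vertex on $\partial c$, where the switch conditions combined with the alternating $\pm 1$ labels on the infinitesimal polygon force the contributions to $\omega(r_c, w')$ to cancel around the boundary cycle. Next -- and this is the main technical step -- I would argue that these elements span all of $\rad(\omega)$. The cleanest route is a dimension count: one identifies $\mathcal{W}(\tau)/\rad(\omega)$ with a subspace of $H_1(S;\RR)$ on which the intersection pairing is non-degenerate, and compares dimensions to obtain $\dim\rad(\omega) = |\partial_{\mathrm{even}}\tau|$. Combined with the evident linear independence of the $r_c$, this yields the identification.

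Once $\rad(\omega)$ is identified with the span of the $r_c$, the construction of $P$ is short. Since $f$ is a train-track map, it permutes the boundary components, sending $\partial c$ to $\partial f(c)$ via a cyclic shift of the sides (possibly composed with a reversal, depending on whether the local orientation at $c$ is preserved). For even-pronged $c$, such a motion preserves or negates the alternating labels according to the parity of the shift, so $f_*(r_c) = \epsilon_c r_{f(c)}$ for some $\epsilon_c \in \{\pm 1\}$. Setting $P_{f(c),c} = \epsilon_c$ gives the desired signed permutation matrix. The left-hand square commutes by construction; the right-hand square commutes because $f_*\omega = \pm\omega$ forces $f_*$ to preserve $\rad(\omega)$, and right-exactness at $\mathcal{W}(\tau)/\rad(\omega)$ is tautological.

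For the skew-reciprocity claim, the induced form $\bar\omega$ on $\mathcal{W}(\tau)/\rad(\omega)$ is non-degenerate, and by \Cref{prop:weightspaceses} the induced map $A := f_*|_{\mathcal{W}(\tau)/\rad(\omega)}$ satisfies $A^\top J A = -J$, where $J$ is the matrix of $\bar\omega$. Rearranging gives $A = -J^{-1} A^{-\top} J$, so $A$ is conjugate to $-A^{-\top}$ and hence shares its characteristic polynomial with $-A^{-1}$. A short computation then shows that $p(t) := \det(tI - A)$ satisfies $p(t) = \pm t^n p(-t^{-1})$, which is precisely skew-reciprocity. The main obstacle, as noted, is establishing that the radical elements span all of $\rad(\omega)$; the rest reduces to routine sign-tracking and linear algebra.
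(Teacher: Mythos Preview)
Your proposal follows essentially the same route as the paper. The paper's proof simply cites \cite[Proposition~5.5]{HT22} for the identification $\rad(\omega)=\spn\{r_c\}$, observes $f_*(r_c)=\pm r_{f(c)}$ to obtain the signed permutation $P$, and then cites \cite[Proposition~4.2]{LS20} for the fact that an anti-symplectic map is skew-reciprocal; you have instead sketched direct arguments for both cited facts, but the skeleton is identical.

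One small caution: your dimension-count argument hinges on the ``evident linear independence'' of the $r_c$, which you assert without justification. Note that the diagram in the proposition has no $0$ on the left, so injectivity of $\mathbb{R}^{\partial_{\mathrm{even}}\tau}\to\mathcal{W}(\tau)$ is not actually being claimed, and the paper's proof accordingly only asserts that the $r_c$ \emph{span} $\rad(\omega)$. If you want to run the dimension count as written you do need linear independence (and it does hold here), but you should either justify it or restructure the argument so that only the spanning statement is needed.
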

\begin{proof}
\cite[Proposition 5.5]{HT22} states that $\rad(\omega) = \spn \{r_c\}$. Meanwhile, observe that $f_*(r_c) = \pm r_{f(c)}$. This implies that $f_*$ acts on $\mathbb{R}^{\partial_{\mathrm{even}} \tau}$ via a signed permutation matrix $P$. 

For the moreover statement, $\omega$ descends to a symplectic form on $\mathcal{W}(\tau)/\rad(\omega)$. $f_*:\mathcal{W}(\tau)/\rad(\omega) \to \mathcal{W}(\tau)/\rad(\omega)$ is anti-symplectic with respect to this symplectic form, hence by \cite[Proposition 4.2]{LS20}, this matrix is skew-reciprocal.
\end{proof}

\begin{prop} \label{prop:orirevpamaptononcycloskewreciprocal}
Let $f$ be an orientation-reversing fully-punctured pseudo-Anosov map with at least two puncture orbits. Let $\tau$ be an $f$-invariant standardly embedded train track. Then the real transition matrix $f^\real_*$ is skew-reciprocal up to cyclotomic factors.
\end{prop}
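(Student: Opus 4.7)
The plan is to chain together the block-triangular decomposition of \Cref{thm:standardlyembeddedtt} with the two exact sequences provided by \Cref{prop:weightspaceses} and \Cref{prop:radicalreciprocal}. At each layer the characteristic polynomial factors off a piece that is either cyclotomic or, in the orientation-reversing setting, skew-reciprocal, so that after assembling everything what remains for $f_*^\real$ is exactly skew-reciprocal up to cyclotomic factors.

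Concretely, \Cref{thm:standardlyembeddedtt} puts $f_*$ on $\RR^{E(\tau)}$ in block upper triangular form, giving
$$\chi_{f_*}(t) = \chi_P(t) \cdot \chi_{f_*^\real}(t),$$
with $\chi_P(t)$ a product of cyclotomic polynomials; so it suffices to show that $\chi_{f_* \text{ on } \RR^{E(\tau)}}(t)$ is skew-reciprocal up to cyclotomic factors. From \Cref{prop:weightspaceses}, the $f_*$-equivariant map $T$ has kernel $\mathcal{W}(\tau)$ and intertwines $f_*$ with the restriction of a permutation matrix on its image in $\RR^{V(\tau)}$, so
$$\chi_{f_* \text{ on } \RR^{E(\tau)}}(t) = \chi_{f_* \text{ on } \mathcal{W}(\tau)}(t) \cdot \chi_{P \text{ on } \mathrm{image}(T)}(t),$$
and the second factor, being the characteristic polynomial of a restriction of a permutation matrix, is cyclotomic. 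Similarly, using the short exact sequence $0 \to \rad(\omega) \to \mathcal{W}(\tau) \to \mathcal{W}(\tau)/\rad(\omega) \to 0$ provided by \Cref{prop:radicalreciprocal}, one obtains
$$\chi_{f_* \text{ on } \mathcal{W}(\tau)}(t) = \chi_{f_* \text{ on } \rad(\omega)}(t) \cdot \chi_{f_* \text{ on } \mathcal{W}(\tau)/\rad(\omega)}(t),$$
and the first factor is cyclotomic, because $f_*$ on $\rad(\omega)$ is a quotient of a signed permutation matrix, whose eigenvalues lie among the roots of unity.

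Finally, the moreover statement of \Cref{prop:radicalreciprocal} says that when $f$ is orientation-reversing, the remaining factor $\chi_{f_* \text{ on } \mathcal{W}(\tau)/\rad(\omega)}(t)$ is skew-reciprocal. Concatenating the three factorizations therefore expresses $\chi_{f_*^\real}(t)$ as a skew-reciprocal polynomial times a product of cyclotomic polynomials, which is exactly \Cref{defn:noncycloskewreciprocal}. The argument is essentially pure bookkeeping once the structural propositions of \Cref{sec:background} and the present section are in hand; the only point of genuine content beyond assembly is the fact that any monic integer polynomial whose roots are all roots of unity is a product of cyclotomic polynomials over $\ZZ$, which converts the ``eigenvalues are roots of unity'' statements for (signed) permutation matrices into the desired algebraic factorizations and is justified by the minimal-polynomial description of cyclotomic polynomials combined with Gauss's lemma.
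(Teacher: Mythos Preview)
Your proof is correct and follows essentially the same route as the paper's: both chain \Cref{prop:radicalreciprocal}, \Cref{prop:weightspaceses}, and the block-triangular form from \Cref{thm:standardlyembeddedtt}, peeling off cyclotomic factors at each step until only the skew-reciprocal piece on $\mathcal{W}(\tau)/\rad(\omega)$ remains. Your write-up is simply more explicit about how the characteristic polynomial factors along each short exact sequence and about why dividing by the cyclotomic factor $\chi_P$ at the end still leaves something skew-reciprocal up to cyclotomic factors (via the root-based formulation of \Cref{defn:noncycloskewreciprocal}).
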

\begin{proof}
The only roots of the characteristic polynomial of a signed permutation matrix are roots of unity. From \Cref{prop:radicalreciprocal}, we deduce that $f_*:\mathcal{W}(\tau) \to \mathcal{W}(\tau)$ is skew-reciprocal up to cyclotomic factors. From \Cref{prop:weightspaceses}, it follows that $f_*: \mathbb{R}^{E(\tau)} \to \mathbb{R}^{E(\tau)}$ is skew-reciprocal up to cyclotomic factors. Finally, from the second item in \Cref{thm:standardlyembeddedtt}, it follows that $f^\real_*$ is skew-reciprocal up to cyclotomic factors.
\end{proof}

\begin{rmk} \label{rmk:skewreciprocalfail}
We exhibit an example that shows that \Cref{prop:orirevpamaptononcycloskewreciprocal} becomes false if we omit `up to cyclotomic factors'.

Let $S$ be the surface $(\mathbb{R}^2 \backslash (\frac{1}{2}\mathbb{Z})^2)/\mathbb{Z}^2$, which is a torus with four punctures. The matrix $\begin{bmatrix} 1 & 1 \\ 1 & 0 \end{bmatrix}$ induces an orientation-reversing pseudo-Anosov map $f$ on $S$. It is straightforward to check that $f$ has two puncture orbits. 

The standardly embedded train track $\tau$ illustrated in \Cref{fig:skewreciprocalfail} top is a $f$-invariant train track. In the basis $(a,b,c,d)$ as indicated in \Cref{fig:skewreciprocalfail} bottom left, one computes
$$f^\real_* = 
\begin{bmatrix}
0 & 0 & 1 & 1 \\
1 & 0 & 0 & 0 \\
1 & 1 & 0 & 0 \\
0 & 0 & 1 & 0 
\end{bmatrix}.$$
The characteristic polynomial of $f^\real_*$ is $t^4-t^2-2t-1=(t^2-t-1)(t^2+t+1)$. Hence in this example, $f^\real_*$ is skew-reciprocal up to cyclotomic factors but not skew-reciprocal.

\begin{figure}
    \centering
    \fontsize{8pt}{8pt}\selectfont
    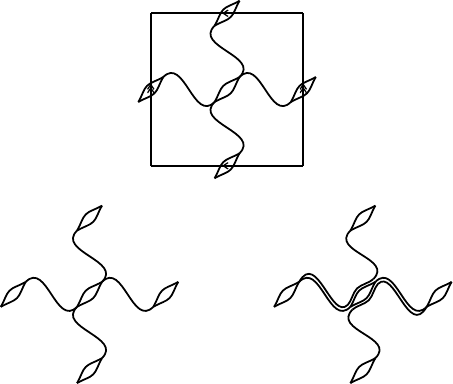
    \caption{An example that shows that \Cref{prop:orirevpamaptononcycloskewreciprocal} becomes false if we omit `up to cyclotomic factors'}
    \label{fig:skewreciprocalfail}
\end{figure}

\end{rmk}

\subsection{A parity condition} \label{subsec:paritycondition}

Recall that a polynomial $p(t) = a_nt^n+\cdots+a_1t+a_0\in\ZZ[t]$ is reciprocal if and only if $p(t) = \pm t^n p(t^{-1})$. This polynomial equation transforms to the equation~$a_d = \pm a_{n-d}$ for all pairs of coefficients~$a_d$ and~$a_{n-d}$. 

Similarly, a polynomial $q(t) = b_mt^m+\cdots+b_1t+b_0\in\ZZ[t]$ is skew-reciprocal if and only if $q(t) = \pm t^{m}q(-t^{-1})$, which in turn implies the equation~$b_d = \pm (-1)^{m-d}b_{m-d}$ on the level of coefficients.

In comparison, the coefficients of a polynomial that is skew-symmetric up to cyclotomic factors may not be symmetric up to signs. For example, consider the polynomial $(t^2-t-1)(t^2+t+1)=t^4-t^2-2t-1$ as in \Cref{rmk:skewreciprocalfail}. 

Nevertheless, we have the following lemma, which states a symmetry condition that holds for any product of a reciprocal and a skew-reciprocal polynomial. 
    
\begin{lemma}
    \label{lemma:parity}
    Suppose~$r(t) = p(t)q(t) = c_kt^k + \cdots + c_1t+c_0$, where~$p \in \ZZ[t]$ is a reciprocal polynomial and~$q \in \ZZ[t]$ is a skew-reciprocal polynomial. Then~$c_d + c_{k-d}$ is even for all~$d=0,\dots,k.$
\end{lemma}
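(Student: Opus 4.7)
My plan is to reduce the statement modulo~$2$, where the distinction between reciprocal and skew-reciprocal collapses. Concretely, if $p(t) = a_n t^n + \cdots + a_0$ is reciprocal, then $p(t) = \pm t^n p(t^{-1})$, and reducing the integer coefficients modulo~$2$ (where $\pm 1 \equiv 1$) yields the identity
$$p(t) \equiv t^n p(t^{-1}) \pmod{2},$$
i.e., $p$ is reciprocal as an element of $\mathbb{F}_2[t]$. Similarly, if $q(t) = b_m t^m + \cdots + b_0$ is skew-reciprocal, then $q(t) = \pm t^m q(-t^{-1})$; but modulo~$2$ the sign in the substitution $-t^{-1}$ disappears, so
$$q(t) \equiv t^m q(t^{-1}) \pmod{2},$$
and $q$ is also reciprocal in $\mathbb{F}_2[t]$.

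Next, I would note that the property of being reciprocal in $\mathbb{F}_2[t]$ is preserved under multiplication: if $P(t) = t^n P(t^{-1})$ and $Q(t) = t^m Q(t^{-1})$ in $\mathbb{F}_2[t]$, then $(PQ)(t) = t^{n+m} (PQ)(t^{-1})$. Applying this to the reductions of $p$ and $q$, we conclude that $r(t) = p(t)q(t)$ satisfies
$$r(t) \equiv t^k r(t^{-1}) \pmod{2},$$
where $k = n+m = \deg r$.

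Comparing coefficients on both sides, this last congruence is exactly the statement $c_d \equiv c_{k-d} \pmod{2}$ for every $d = 0, \dots, k$, which is equivalent to $c_d + c_{k-d}$ being even. I do not anticipate a serious obstacle: the only mild care needed is to check that the $\pm$ signs in the definitions of reciprocal and skew-reciprocal (together with the sign inside $q(-t^{-1})$) genuinely disappear after reduction mod~$2$, which is what allows the two types of symmetry to be merged into a single symmetry of $r$ modulo~$2$.
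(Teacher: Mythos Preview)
Your proposal is correct and follows essentially the same approach as the paper: both arguments reduce the coefficients modulo~$2$, observe that the sign discrepancies in the reciprocal and skew-reciprocal conditions vanish so that $a_d \equiv a_{n-d}$ and $b_d \equiv b_{m-d}$ in $\ZZ/2\ZZ$, and then conclude that the product $r=pq$ inherits the same symmetry $c_d \equiv c_{k-d} \pmod 2$.
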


\begin{proof}
Let $a_i$, $b_j$ denote the coefficients of $p$ and $q$ respectively.
Considering the coefficients in~$\ZZ/2\ZZ$, we have~$a_d \equiv a_{n-d}~(\mathrm{mod}~2)$ and~$b_d \equiv b_{m-d}~(\mathrm{mod}~2)$. Hence the coefficients of~$r=pq$ also satisfy~$c_d\equiv c_{k-d}~(\mathrm{mod}~2)$, which means exactly that ~$c_d + c_{k-d}$ is even.
\end{proof}

Clearly, \Cref{lemma:parity} is only a rough necessary condition for polynomials that are skew-reciprocal up to cyclotomic factors. In \Cref{section:curvegraphanalysis} we will need to refine out analysis in order to obtain an asymptotic classification result for small normalized spectral radii of matrices~$A \in \mathrm{GL}_n(\ZZ)$ that are primitive and skew-reciprocal up to cyclotomic factors.

\section{Analysis of small curve graphs}
\label{section:curvegraphanalysis}

The objective of this section is to prove the following theorem.

\begin{thm} \label{thm:skucspectralradius}
Let $A$ be a primitive matrix in $\mathrm{GL}_n(\ZZ)$, $n \geq 4$, that is skew-reciprocal up to cyclotomic factors. Then $\rho(A)^n \geq 3+2\sqrt{2}$.
\end{thm}

Together with \Cref{thm:standardlyembeddedtt} and \Cref{prop:orirevpamaptononcycloskewreciprocal}, this implies the inequality in \Cref{thm:mainthm}.

The starting point of the proof is the following proposition.

\begin{prop}
    \label{prop:smallcurvegraphs}
    Let~$A \in \mathrm{GL}_n(\ZZ)$ be a primitive matrix with spectral radius~$\rho(A) > 1$ such that~$\rho(A)^n < 3+2\sqrt{2}$. Then the reciprocal of the characteristic polynomial of~$A$ has one of the following forms:
    \begin{align}
         Q(t) &= 1-t^a -t^b \label{2A_1}\tag{$2A_1$} \\
         Q(t) &= 1- t^a -t^b -t^{c} \label{3A_1}\tag{$3A_1$}\\
         Q(t) &= 1-t^a -t^b -t^c -t^d \label{4A_1}\tag{$4A_1$}\\
         Q(t) &= 1-t^a-t^b-t^c-t^d-t^e \label{5A_1}\tag{$5A_1$}\\
         Q(t) &= 1-t^a-t^b-t^c+t^{a+b} \label{A^*_2}\tag{$A^*_2$}
    \end{align}
    for suitable positive parameters~$a,b,c,d,e \in\ZZ$.
\end{prop}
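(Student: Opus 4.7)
The plan is to exploit the restrictive combination $\rho(A) > 1$ and $\rho(A)^n < \sigma^2 = 3+2\sqrt{2}$ to force the reciprocal characteristic polynomial $Q(t)$ into one of a small number of sparse shapes, adapting the template used by McMullen in his work on small Mahler measures and the refinement for skew-reciprocal matrices in~\cite{Lie23}.

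First I would set up notation. Let $p(t) = \det(tI - A)$ and $Q(t) = t^n p(1/t) = \sum_{i=0}^n c_i t^i$. Then $c_0 = 1$, since $p$ is monic, and $c_n = (-1)^n \det(A) = \pm 1$, since $A \in \mathrm{GL}_n(\ZZ)$. Primitivity together with the Perron--Frobenius theorem guarantees that $\rho(A)^{-1}$ is a simple positive real root of $Q$, strictly smaller in absolute value than every other root. Note that all five target shapes satisfy $c_0 = 1$, $c_n = \pm 1$, and their non-leading coefficients lie in $\{-1,0,+1\}$, so the plan is to reduce to this sparse sign-constrained regime.

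Next comes the main technical step: bounding the $\ell^1$-norm of the coefficient sequence of $Q$. The assumption $\rho(A)^n < \sigma^2$ controls the Mahler measure $M(Q) = \prod_{|\lambda_i|>1}|\lambda_i| \leq \rho(A)^n$. Combined with classical inequalities relating $M(Q)$ to $\sum |c_i|$ and sharpened by a direct power-sum/Newton-identity argument that uses the dominant Perron eigenvalue $\rho$ to control the remaining spectrum, this should force $\sum_{i \geq 1} |c_i| \leq 5$. At this point $Q$ has at most six non-zero coefficients, each of small magnitude. A finite case analysis then enumerates all sparse integer polynomials $Q$ satisfying this $\ell^1$-bound together with $c_0 = 1$, $c_n = \pm 1$, and the existence of a simple real root in $(0,1)$. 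The four families $kA_1$ arise when all non-leading coefficients equal $-1$, with $k=2,3,4,5$ counting the number of $-1$'s. The $A_2^*$ family is the unique shape requiring a positive non-leading coefficient: the algebraic constraint that this extra term sit at exponent $a+b$ is forced by the requirement that $Q$ has a root in $(0,1)$ and no competing small root violating primitivity. Other mixed-sign sparse polynomials must be ruled out individually, typically because they either fail primitivity or already exceed $\sigma^2$ in normalized spectral radius.

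The principal obstacle is the coefficient estimate in the middle step: the standard Mahler-measure inequalities are too loose to produce the sharp bound $\sum_{i \geq 1} |c_i| \leq 5$. One must instead adapt the quantitative estimates of~\cite{Lie23}, originally developed for skew-reciprocal matrices, to the present primitive setting, where no spectral symmetry is assumed a priori. A secondary difficulty is ensuring exhaustiveness of the final enumeration: one must systematically verify that every sparse sign pattern outside the five listed families either contradicts primitivity of $A$ or produces $\rho(A)^n \geq \sigma^2$, and this likely requires a short but careful auxiliary classification of small-degree candidates.
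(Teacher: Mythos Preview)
Your proposal has a genuine gap at precisely the point you flag as the ``principal obstacle'': the claimed bound $\sum_{i\ge 1}|c_i|\le 5$ does not follow from Mahler-measure inequalities, and the appeal to~\cite{Lie23} is misplaced. The estimates in~\cite{Lie23} are not obtained by bounding coefficient $\ell^1$-norms via Mahler measure; they go through McMullen's curve-graph machinery from~\cite{McM15}, which is also what the paper does here. Without the clique-polynomial interpretation, there is no reason for a primitive matrix in $\mathrm{GL}_n(\ZZ)$ with $\rho(A)^n<\sigma^2$ to have small $\ell^1$-norm of coefficients, and I do not see how any Newton-identity or power-sum argument closes this gap. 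Relatedly, your explanation for why the positive term in the $A_2^*$ family sits at exponent exactly $a+b$ (``forced by the requirement that $Q$ has a root in $(0,1)$'') is not a proof; that constraint is structural, not analytic.

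The paper's argument is completely different and essentially a citation. One associates to $A$ its directed graph $\Gamma$, forms the \emph{curve graph} $G_A$ whose vertices are simple closed curves in $\Gamma$ (weighted by length) and whose edges record disjointness. A classical identity identifies $Q(t)=t^n\chi_A(t^{-1})$ with the clique polynomial of $G_A$. McMullen's classification \cite[Theorem~1.6]{McM15} lists all weighted curve graphs of growth rate $<8$; restricting to those with growth rate $<3+2\sqrt{2}$ leaves exactly $2A_1,\dots,5A_1$ and $A_2^*$, whose clique polynomials are the five displayed forms. In particular, the exponent $a+b$ in $A_2^*$ is automatic: it is the weight of the unique $2$-clique formed by the two adjacent vertices of weights $a$ and $b$. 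This combinatorial structure is what is missing from your approach.
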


In the statement of \Cref{prop:smallcurvegraphs}, each polynomial is labelled by the corresponding curve graph, as we will explain in the proof below.

\begin{proof}[Proof of Proposition~\ref{prop:smallcurvegraphs}]
This result is contained in \cite{McM15}. We briefly indicate how to deduce this exact statement from the technology in that paper.

To a matrix~$A \in \mathrm{GL}_n(\ZZ)$ we associate a directed graph~$\Gamma$ that has~$n$ vertices and directed edges between vertices according to the coefficients of the matrix~$A$. The associated curve graph~$G_A$ has one vertex for every simple closed curve in~$\Gamma$, where a simple closed curve is a directed closed loop that visits every vertex at most once. Further, two vertices of~$G_A$ are connected by an edge if and only if the corresponding simple closed curves have no vertex of~$\Gamma$ in common. Every vertex~$v$ of~$G_A$ has a weight~$w(v)$ describing the number of edges in the associated simple closed curve. 

It is a well-known result from graph theory that the clique polynomial~$Q(t)$ of the weighted graph~$G_A$ is the reciprocal of the characteristic polynomial of the matrix~$A: Q(t) = t^n \chi_A (t^{-1})$. 
In particular the spectral radius $\rho(A)$ equals the reciprocal of the smallest positive root of $Q(t)$, which in turn equals the growth rate of $G_A$ (\cite[Theorem 4.2]{McM15}).

It suffices to describe the possible clique polynomials that can arise for the matrix~$A$. McMullen’s classification of curve graphs with small growth rate implies that the only curve graphs that can arise for the matrix~$A$ are the graphs~$nA_1$ for~$n=2,\dots,5$ or~$A^\ast_2$. To be precise, \cite[Theorem 1.6]{McM15} lists all possible curve graphs that can arise for~$\rho(A)^n < 8$ and \cite[Figure 1]{McM15} lists the minimum growth rates for each of these. We rule out those graphs where we cannot have~$\rho(A)^n < 3+2\sqrt{2}$. The possible clique polynomials for the remaining curve graphs are given in the statement of the proposition. 
\end{proof}

\textbf{Proof of \Cref{thm:skucspectralradius}.}
\Cref{prop:smallcurvegraphs} gives us a list of five different forms of polynomials that we need to consider. We will analyze these one-by-one below. Throughout the analysis we write $\mu = \frac{1+\sqrt{5}}{2}$ for the golden ratio and $\sigma = 1+\sqrt{2}$ for the silver ratio. 
We write $P(t)$ for the characteristic polynomial of $A$ and $Q(t)$ for the reciprocal of $P(t)$, as in \Cref{prop:smallcurvegraphs}.

\subsection*{Case $2A_1$.} In this case~$Q(t) = 1-t^a - t^b$. 
Without loss of generality, we may assume~$b=n$. 
This yields characteristic polynomials of the form~$P(t)=t^{n}-t^a-1.$ 
Observe that $n$ must be even, since otherwise by skew-reciprocity up to cyclotomic factors, $P(t)$ must have an odd number of roots (with multiplicity) on the unit circle, thus $P(t)$ must have $1$ or $-1$ as a root, yet $P(\pm 1)$ is an odd integer so this is impossible. We write $n=2g$, $g \geq 2$.
The only way for $P(t)$ to satisfy the parity condition of \Cref{lemma:parity} is if $a=g$. But then the polynomial is a polynomial in~$t^g$ and $A$ would not be primitive.

\begin{rmk} \label{rmk:2A_1lowdeg}
Here we used the assumption that $n \geq 4$ to have $g \geq 2$, thus obtain a contradiction to $A$ being primitive. When $n=2$, we can have $P(t)=t^2-t-1$, whose normalized largest root is $\mu^2 < 3+2\sqrt{2}$. In fact, this characteristic polynomial is realized by the orientation-reversing fully-punctured pseudo-Anosov map obtained as the action of $\begin{pmatrix} 1 & 1 \\ 1 & 0 \end{pmatrix}$ on the 4-punctured sphere, which has two puncture orbits.
\end{rmk}

\subsection*{Case $3A_1$.} In this case~$Q(t)=1-t^a-t^b-t^c,$ and we may assume~$c=n$. 
This yields characteristic polynomials of the form~$P(t)=t^{n}-t^a-t^b-1.$ 
There are two ways to satisfy the parity condition of \Cref{lemma:parity}: either~$a+b=n$ or~$a=b$. 
Hence the characteristic polynomial has one of the following two forms:
\begin{enumerate}
    \item $P(t) = t^{n}-t^{\frac{n}{2}+d}-t^{\frac{n}{2}-d}-1$, where $0 \leq d < \frac{n}{2}$, or
    \item $P(t) = t^{n}-2t^a-1$, where $0 \leq a < n$.
\end{enumerate}

\begin{claim} \label{claim:3A1claim1}
In case (1), the largest root $\lambda$ of $P(t)$ is strictly increasing in~$d$.
\end{claim}

\begin{proof}[Proof of \Cref{claim:3A1claim1}]
We introduce the function $h(x,s)=x^2-x^{1+s}-x^{1-s}-1$ defined on the domain 
$C = \{ (x,s) \in \RR^2 :\ x>1,\ 0 \leq s<1\}$. Observe $x=\lambda^\frac{n}{2}$ is the largest real root of $h(x,s)$ with $s=\frac{2d}{n}<1$. For any fixed $s$, notice that
$$
\partial^2_x h(x,s) = 2-(1+s)sx^{s-1}+(1-s)sx^{-s-1} > 0.
$$
Thus $\partial_x h(x,s)$ is strictly increasing in $x$. Furthermore $\lim_{x\to \infty} \partial_x h(x,s)=+\infty$ and $h(1,s)=-2<0$, hence the equation $h(x,s)=0$ has a unique real solution $x(s)>1$ depending continuously on $s$. 
Meanwhile, $\partial_s h(x,s) \leq 0$ on $C$, where equality holds if and only if $s=0$. Since $\partial_x h(x(s),s)>0$ for any $s$, we have $x'(s) \geq 0$, equality holds if and only if $s=0$. Thus $x(s)$ is strictly increasing in $s$.
\end{proof}

In particular, the minimal largest root $\lambda$ is attained at~$d=0,$ when $P(t) = t^{n}-2t^{\frac{n}{2}}-1$ has normalised largest root~$3+2\sqrt{2}.$

In case (2), if $P(t)$ is skew-reciprocal, then we must have $n$ even and $a = \frac{n}{2}$, which would reduce to case (1). Hence we can assume that $P(t)$ is skew-reciprocal up to cyclotomic factors but not skew-reciprocal. Then there must exist a root of unity~$\xi$ that is a zero: $\xi^n -2\xi^a = 1.$ The only way for this to happen is if~$\xi^n = \xi^a = -1$. Since $A$ is primitive, $n$ and $a$ are coprime, hence $\xi=-1$ and $n$ and $a$ are odd. Writing $P(t) = t^a (t^{n-a}-1) - (t^a+1)$, we compute that the quotient of $P(t)$ by $t+1$ to be 
$$R(t) = t^a(t^{n-a-1}-t^{n-a-2} + \cdots -1) - (t^{a-1}-t^{a-2} + \cdots +1)$$
Observe that $R(t)$ has no roots of unity as a root. Indeed, by the reasoning above, the only possible root is $\xi =-1$, yet $R(-1) = (n-a) - a = n-2a$ is an odd integer. 
Meanwhile, by inspecting the coefficients of $R(t)$,
$$R(t) = \begin{cases} t^{n-1}-t^{n-2}+ \cdots +t-1 & \text{if $a \geq 3$} \\ t^{n-1}-t^{n-2}+t^{n-3}- \cdots +t^2-t-1 & \text{if $a=1$} \end{cases}.$$
we see that $R(t)$ is not skew-reciprocal, giving us a contradiction.

\begin{rmk} \label{rmk:3A_1lowdeg}
In the analysis above, we used the assumption that $n \geq 4$ to deduce that $R(t)$ is not skew-reciprocal. When $n=3$ and $a=1$, $R(t)=t^2-t-1$ is actually skew-reciprocal. Correspondingly, the characteristic polynomial $P(t)=t^3-2t-1$ is skew-reciprocal up to cyclotomic factors, and has normalized largest root $\mu^3 < 3+2\sqrt{2}$. It is not clear to us at the moment whether this characteristic polynomial is attained by any orientation-reversing fully-punctured pseudo-Anosov map with at least two puncture orbits.
\end{rmk}

\subsection*{Case $4A_1$.} In this case~$Q(t)=1-t^a-t^b-t^c-t^d,$ and we may assume~$d=n$. This yields characteristic polynomials of the form~$P(t) = t^{n}-t^a-t^b-t^c-1$. 
As in case $2A_1$, $n$ must be even. Let us write $n=2g$.

There are three coefficients other than the ones associated to $t^{2g}$ and $-1$.
The parity condition of \Cref{lemma:parity} implies that one of them must be the middle coefficient: say~$b=g$. There are two ways to satisfy the parity condition of \Cref{lemma:parity}: either~$a+c=2g$ or~$a=c$. Hence the characteristic polynomial has one of the following two forms:
\begin{enumerate}
\item $P(t)=t^{2g}-t^{g+d}-t^g-t^{g-d}-1$, where $0 \leq d < g$, or
\item $P(t) = t^{2g}-2t^{a}-t^g-1$, where $0 < a < 2g$.
\end{enumerate}

\begin{claim} \label{claim:4A1claim2}
In case (1), the largest root $\lambda$ of $P(t)$ is strictly increasing in $d$.
\end{claim}

\begin{proof}[Proof of \Cref{claim:4A1claim2}]
We introduce the function $h(x,s)=x^2-x^{1+s}-x-x^{1-s}-1$ defined on the domain 
$C = \{ (x,s) \in \RR^2 :\ x>1,\ 0 \leq s<1\}$. Observe $x=\lambda^g$ is the largest real root of $h(x,s)$ with $s=\frac{d}{g}<1$. For any fixed $s$, notice that
$$
\partial^2_x h(x,s) = 2-(1+s)sx^{s-1}+(1-s)sx^{-s-1} > 0.
$$
Thus $\partial_x h(x,s)$ is strictly increasing in $x$. Furthermore $\lim_{x\to \infty} \partial_x h(x,s)=+\infty$ and $h(1,s)=-3<0$, hence the equation $h(x,s)=0$ has a unique real solution $x(s)>1$ depending continuously on $s$.
Meanwhile, $\partial_s h(x,s) \leq 0$ on $C$, where equality holds if and only if $s=0$. Since $\partial_x h(x(s),s)>0$ for any $s$, we have $x'(s) \geq 0$, equality holds if and only if $s=0$. Thus $x(s)$ is strictly increasing in $s$.
\end{proof}

In particular, the minimal largest root is attained at~$d=0$, when~$P(t)=t^{2g}-3t^g-1$ has normalized largest root~$\left( \cfrac{3+\sqrt{13}}{2}\right)^2 > 3+2\sqrt{2}$.

In the case (2), if $P(t)$ is skew-reciprocal, then we must have $a = g$, which belongs to the former case. Hence we can assume that $P(t)$ is skew-reciprocal up to cyclotomic factors but not skew-reciprocal. Then there must exist a root of unity~$\xi$ that is a zero: $\xi^{2g}-\xi^g-1=2\xi^a$.
We claim that the only way this can happen is if $\xi$ is a sixth root of unity. 

To see this, we write $\xi^g=e^{i\theta}$ and take the norm of both sides of the equation
\begin{align*}
4 &= (\cos 2\theta -\cos \theta - 1)^2 + (\sin 2\theta - \sin \theta)^2 \\
&= (\cos^2 2\theta + \cos^2 \theta +  1 + 2 \cos \theta - 2 \cos 2\theta - 2 \cos 2\theta \cos \theta) \\
&~+ ( \sin^2 2\theta +  \sin^2 \theta - 2 \sin 2\theta \sin \theta) \\
&= 3 + 2 \cos \theta - 2 \cos 2\theta - 2  \cos (2\theta-\theta)  \\
&= 3  - 2 \cos 2\theta. 
\end{align*}
Thus $\cos 2\theta = -\frac{1}{2}$ and $\theta \in \{ \frac{\pi}{3},\frac{2\pi}{3},\frac{4\pi}{3},\frac{5\pi}{3}\}$. For each of the four possibilities for $\theta$, we have:
$$
\begin{array}{c|c|c|c}
\theta & \xi^g & \xi^{2g} & \xi^a = \frac{\xi^{2g}-\xi^g-1}{2} \\
\hline
\frac{2\pi}{3} & e^{\frac{2i\pi}{3}} & e^{\frac{4i\pi}{3}}  & e^{\frac{4i\pi}{3}} \\
\frac{4\pi}{3} & e^{\frac{4i\pi}{3}} & e^{\frac{2i\pi}{3}} & e^{\frac{2i\pi}{3}} \\
\frac{\pi}{3} & e^{\frac{i\pi}{3}} & e^{\frac{2i\pi}{3}} & -1 \\
\frac{5\pi}{3} & e^{\frac{5i\pi}{3}} & e^{\frac{4i\pi}{3}} & -1 
\end{array}
$$
Suppose $\xi$ is a primitive $q^\text{th}$ root of unity. 
For the first two cases, since $\xi^{3g} = \xi^{3a} = 1$, we have $q | \gcd(3g,3a) = 3$, i.e. $q=3$. In turn, we have $a=3A-k$ and $g=3G+k$ for $k=1$ or $2$.
Similarly for the last two cases, since $\xi^{6g} = \xi^{2a} = 1$, we have $q | \gcd(6g,2a) \leq 6$. But $\xi^g$ is a primitive sixth root of unity, so in fact $q=6$. In turn, we have $a=6A+3$ and $g=6G+k$ for $k=\pm 1$.

In particular, this reasoning shows that the first two and the last two cases are mutually disjoint. Correspondingly, $P(t)$ is in one of two possible forms:
\begin{enumerate}[label=(\roman*)]
\item $P(t) = t^{6G+2k}-2t^{3A-k}-t^{3G+k}-1$, where $k=1$ or $2$, and $1 \leq A < 2G+k$, or
\item $P(t) = t^{12G+2k}-2t^{6A+3}-t^{6G+k}-1$, where $k = \pm 1$, and $0 \leq A < 2G$.
\end{enumerate}

\begin{claim} \label{claim:4A1claim1}
In case (i), if $P(t)$ is skew-reciprocal up to cyclotomic factors but not skew-reciprocal, then $k=2$, $G=0$, and $A=1$, i.e. $P(t) = t^4-t^2-2t-1=(t^2-t-1)(t^2+t+1)$ (compare with \Cref{rmk:skewreciprocalfail}) which has normalized largest root $\mu^4 > 3+2\sqrt{2}$.
\end{claim}

\begin{proof}[Proof of \Cref{claim:4A1claim1}]
Assume $P(t)$ is skew-reciprocal up to cyclotomic factors, but not skew-reciprocal. We reasoned above that it has a cubic root of unity as a root. We will compute the quotient 
$R(t)$ of $P(t)$ by $t^2+t+1$.
$$
P(t) = 2(t^3-1)\frac{(t^{6G+2k}-t^{3A-k})}{t^3-1}-(t^{6G+2k}+t^{3G+k}+1).
$$
For the first term, we get
$$
2(t^3-1)\frac{(t^{6G+2k}-t^{3A-k})}{t^3-1} = 2t^{3A-k}(t^3-1)(t^{6G-3A+3k-3}+ \cdots + t^3 + 1).
$$
On the other hand, for the second term, we get
\begin{align*}
t^{6G+2k}+t^{3G+k}+1 &= (t^{6G+2k}-t^{3G+2k})-(t^{3G}-1) + (t^{3G+2k} + t^{3G+k} + t^{3G} ) \\
&= t^{3G+2k}(t^{3G}-1)-(t^{3G}-1)+ t^{3G}(t^{2k}+t^k+1) \\
&= (t^{3G}-1)(t^{3G+2k}-1) + t^{3G}(t^{2k}+t^k+1) \\
&= (t^3-1)(t^{3G-3}+\cdots+t^3+1)(t^{3G+2k}-1) + t^{3G}(t^{2k}+t^k+1).
\end{align*}

Hence
\begin{align*}
R(t) &= 2t^{3A-k}(t-1)(t^{6G-3A+3k-3}+ \cdots + t^3 + 1) \\
&~- (t-1)(t^{3G-3}+\cdots+t^3+1)(t^{3G+2k}-1) - t^{3G}\frac{t^{2k}+t^k+1}{t^2+t+1}.
\end{align*}

We now check that $R(t)$ has no roots of unity as a root. Indeed, by the reasoning above, if $\xi$ is a root then it is a third root of unity.
However, we compute
\begin{align*}
R(\xi) &= \begin{cases} 2(1-\xi^2)(2G-A+1) - 3G-1 & \text{if $k=1$} \\ 2(\xi^2-\xi)(2G-A+2) + 3\xi G+2\xi & \text{if $k=2$} \end{cases} \\
&\neq 0.
\end{align*}
Thus we only need to show that $R(t)$ is not skew-reciprocal, except for the specified case.

For $k=1$, 
\begin{align*}
R(t) &= 2\sum_{i=A}^{2G} (t^{3i} - t^{3i-1})- t^{3G} - \sum_{i=0}^{G-1} (t^{3i+1} - t^{3i})(t^{3G+2}-1) \\
&= t^{6G}-t^{6G-1} +\cdots +t-1
\end{align*}
is not skew-reciprocal.

For $k=2$ and $G > 0$,
\begin{align*}
R(t) &= 2\sum_{i=A}^{2G+1} (t^{3i-1} - t^{3i-2})- (t^2-t+1)t^{3G} - \sum_{i=0}^{G-1} (t^{3i+1} - t^{3i})(t^{3G+4}-1) \\
&= \begin{cases} t^{6G+2}-t^{6G+1} +\cdots +t-1 & \text{if $A >1$} \\ t^{6G+2}-t^{6G+1}+0t^{6G} + \cdots +2t^2-t-1 & \text{if $A=1$} \end{cases}
\end{align*}
is not skew-reciprocal.
If $G=0$, then $A=1$ and $R(t) = t^2-t-1$ is actually skew-reciprocal.
\end{proof}

\begin{claim} \label{claim:4A1claim3}
In case (ii), $P(t)$ is never skew-reciprocal up to cyclotomic factors.
\end{claim}

\begin{proof}[Proof of \Cref{claim:4A1claim3}]
We follow the same strategy as in case (i). We first compute the quotient of $P(t)$ by $t^2-t+1$.
$$
P(t) = t^{12G+2k}-2t^{6A+3}-t^{6G+k}-1 = (t^{12G+2k}-t^{6G+k}+1) -2(t^{6A+3} + 1).
$$
For the first term:
\begin{align*}
t^{12G+2k}-t^{6G+k}+1 &= (t^{12G+2k}-t^{6G+2k})-(t^{6G}-1)+(t^{6G+2k}-t^{6G+k}+t^{6G}) \\
&= (t^{6G}-1)(t^{6G+2k}-1) +t^{6G+k-1}(t^{1+k}-t+t^{1-k}) \\
&= (t^{6G-3}-\cdots +t^3-1)(t^3+1)(t^{6G+2k}-1) +t^{6G+k-1}(t^2-t+1).
\end{align*}

For the second term:
$$t^{6A+3} + 1 = (t^{6A}- \cdots -t^3+1)(t^3+1).$$

Hence
\begin{align*}
R(t) &= (t^{6G-3}-\cdots +t^3-1)(t+1)(t^{6G+2k}-1) +t^{6G+k-1} - 2(t^{6A}- \cdots -t^3+1)(t+1).
\end{align*}

We then check that $R(t)$ has no root of unity as a root. The only possible root $\xi$ satisfies $\xi^3=-1$, and
\begin{align*}
R(\xi) &= \begin{cases} 6\xi G -\xi -2(2A+1)(\xi+1) & \text{if $k=-1$} \\ 6G+1-2(2A+1)(\xi+1) & \text{if $k=1$} \end{cases} \\
&\neq 0
\end{align*}

Finally we observe that
\begin{align*}
R(t) &= \sum_{i=0}^{2G-1} (-1)^{i+1}(t^{3i+1}+t^{3i})(t^{6G+2k}-1) + t^{6G+k-1} - 2\sum_{i=0}^{2A} (-1)^i(t^{3i+1}+t^{3i}) \\
&= \begin{cases} t^{12G+2k-2}+t^{12G+2k-3} - \cdots -t-1 & \text{if $k=1$ or $A < 2G-1$} \\ t^{12G-4}-t^{12G-5}-2t^{12G-6} - \cdots +0t^2-t-1 & \text{if $k=-1$ and $A = 2G-1$} \end{cases}
\end{align*}
is not skew-reciprocal.
\end{proof}

\subsection*{Case $5A_1$.} In this case,~$Q(t)=1-t^a-t^b-t^c-t^d-t^e,$ and we may assume~$e=n$. This yields characteristic polynomials of the form~$t^{n}-t^a-t^b-t^c-t^d-1$. To satisfy the parity condition of \Cref{lemma:parity}, the characteristic polynomial has one of the three possible forms
\begin{enumerate}
\item $P(t) = t^{n}-t^{\frac{n}{2}+a}-t^{\frac{n}{2}+b}-t^{\frac{n}{2}-b}-t^{\frac{n}{2}-a}-1$, for $0\leq a,b<\frac{n}{2}$, or
\item $P(t) = t^{n}-2t^{a}-t^{\frac{n}{2}+b}-t^{\frac{n}{2}-b}-1$, for $0 < a < n$, $0\leq b<\frac{n}{2}$, or
\item $P(t) = t^{n}-2t^{a}-2t^{b}-1$, for $0< a,b<n$.
\end{enumerate}

\begin{claim} \label{claim:5A1claim1}
In case (1), the largest root $\lambda$ of $P(t)$ is strictly increasing in $a$ and $b$.
\end{claim}

\begin{proof}[Proof of \Cref{claim:5A1claim1}]
We introduce the function $h(x,s,u)=x^2-x^{1+s}-x^{1+u}-x^{1-s}-x^{1-u}-1$ defined on the domain 
$C = \{ (x,s,u) \in \RR^3 :\ x>1,\ 0 \leq  s,u <1\}$. Observe that $x=\lambda^{\frac{n}{2}}$ is the largest real root of $h(x,s,u)$ with $s=\frac{2a}{n}<1$ and $u=\frac{2b}{n}<1$. For any fixed $(s,u)$ and $x>1$,
$$
\partial^3_x h(x,s,u) = (1+s)s(1-s) (x^{s-2} - x^{-s-2}) + (1+u)u(1-u) (x^{u-2} - x^{-u-2}) > 0.
$$
Hence $\partial^2_x h(x,s,u)$ is strictly increasing in $x$. Moreover $\partial_x h(1,s,u) = 2-(1+s)-(1+u)-(1-s)-(1-u) =-2 < 0$ and $\partial_x h(x,s,u) > 0$ for large enough $x>1$. Thus $\partial_x h(\cdot,s,u)$ has a unique zero $x_0>1$, and $h(\cdot,s,u)$ is decreasing on $(1,x_0)$ and increasing on $(x_0,\infty)$. Since $h(1,s,u)=-4<0$ and $\lim_{x\to \infty} h(x,s,u)=+\infty$ this proves that the equation $h(x,s,u)=0$ has a unique real solution $x(s,u)> x_0>1$, depending continuously on $(s,u)$. Furthermore, $\partial_s h(x,s,u) \leq 0$ and $\partial_u h(x,s,u) \leq 0$, with equality if and only if $s=0$ and $u=0$ respectively, hence by the implicit function theorem $x(s,u)$ is strictly increasing in $s$ and $u$.
\end{proof}

In particular, the minimal largest root of $P(t)$ is attained at~$a=b=0$, when $P(t)=t^{n}-4t^{\frac{n}{2}}-1$ has normalized largest root~$(2+\sqrt{5})^2 > 3+2\sqrt{2}$. 

In the second case, we claim that the largest root $\lambda$ of $P(t)$ is strictly increasing in $a$ and $b$.

\begin{claim} \label{claim:5A1claim2}
In case (2), the largest root $\lambda$ of $P(t)$ is strictly increasing in $a$ and $b$.
\end{claim}

\begin{proof}[Proof of \Cref{claim:5A1claim2}]
We introduce the function $h(x,s,u)=x^2-2x^s-x^{1+u}-x^{1-u}-1$ defined on the domain 
$C = \{ (x,s,u) \in \RR^3 :\ x>1, 0<s<2 ,0 \leq u <1\}$. Observe that $x=\lambda^{\frac{n}{2}}$ is the largest real root of $h(x,s,u)$ with $s=\frac{2a}{n}$ and $u=\frac{2b}{n}$. We further separate $C$ into $C_1 = \{ (x,s,u) \in C : 0<s \leq 1\}$ and $C_2 = \{ (x,s,u) \in C : 1 \leq s < 2\}$. 

For any fixed $(s,u) \in C_1$,
$$
\partial^2_x h(x,s,u) = 2+2s(1-s)x^{s-2} - (1+u)u x^{u-1} + (1-u)u x^{-u-1} > 0.
$$
Thus $\partial_x h(x,s,u)$ is strictly increasing in $x$. Furthermore $\lim_{x\to \infty} \partial_x h(x,s,u)=+\infty$ and $h(1,s,u)=-4<0$, hence the equation $h(x,s,u)=0$ has a unique real solution $x(s,u)>1$ depending continuously on $s,u$. Furthermore, $\partial_s h(x,s,u) < 0$ and $\partial_u h(x,s,u) \leq 0$, with equality if and only if $u=0$, hence by the implicit function theorem $x(s,u)$ is strictly increasing in $s$ and $u$ in $C_1$.

For any fixed $(s,u) \in C_2$,
$$
\partial^3_x h(x,s,u) = 2s(s-1)(2-s)x^{s-3} + (1+u)u(1-u) (x^{u-2} - x^{-u-2}) > 0.
$$
Hence $\partial^2_x h(x,s,u)$ is strictly increasing in $x$. Moreover $\partial_x h(1,s,u) = 2-2s-(1+u)-(1-u) =-2s < 0$ and $\partial_x h(x,s,u) > 0$ for large enough $x>1$. Thus $\partial_x h(\cdot,s,u)$ has a unique zero $x_0>1$, and $h(\cdot,s,u)$ is decreasing on $(1,x_0)$ and increasing on $(x_0,\infty)$. Since $h(1,s,u)=-4<0$ and $\lim_{x\to \infty} h(x,s,u)=+\infty$ this proves that the equation $h(x,s,u)=0$ has a unique real solution $x(s,u)> x_0>1$, depending continuously on $(s,u)$. Furthermore, $\partial_s h(x,s,u) < 0$ and $\partial_u h(x,s,u) \leq 0$, with equality if and only if $u=0$, hence by the implicit function theorem $x(s,u)$ is strictly increasing in $s$ and $u$ in $C_2$.
\end{proof}

In particular, the minimal largest root $\lambda$ is attained at~$a=1,\ b=0$, when $P(t)=t^{n}-2t^{\frac{n}{2}}-2t-1$. We can compare its root $\lambda$ to the largest real root $\alpha$ of $t^{n}-2t^{\frac{n}{2}}-1$. We find $P(\alpha)=\alpha^{n}-2\alpha^{\frac{n}{2}}-2\alpha-1 = -2\alpha < 0$ thus $\lambda > \alpha$. In particular, $\lambda^{n} > \alpha^n = 3+2\sqrt{2}$ as desired.

In case (3), if $P(t) = t^{n}-2t^{a}-2t^{b}-1$ is skew-reciprocal, then we reduce to case (1), so we can assume that it is skew-reciprocal up to cyclotomic factors but not skew-reciprocal. Suppose $P(t)$ has primitive $q^\text{th}$ root of unity $\xi=e^{i\theta}$ as a root, namely
\begin{equation}
\label{eq:roots:unity}
\xi^{n} = 2\xi^{a}+2\xi^{b}+1.
\end{equation}
Taking the norm of each side of this equation, we get
{\small \begin{align*}
1 &= (2\cos a\theta+2\cos b\theta+1)^2 + (2\sin a\theta+2\sin b\theta)^2 \\
&= 1+ 4(\cos^2 a\theta + \cos^2 b\theta + \cos a\theta + \cos b\theta + 2\cos a\theta\cos b\theta) \\
&~+ 4(\sin^2 a\theta + \sin^2 b\theta + 2\sin a\theta \sin b\theta) \\
&= 9 + 8(\cos a\theta\cos b\theta + \sin a\theta \sin b\theta) + 4\cos a\theta + 4\cos b\theta \\
&= 9 + 8 \cos (a-b)\theta + 8 \cos \frac{(a+b)\theta}{2} \cos \frac{(a-b)\theta}{2} \\
&= 1 + 16 \cos^2 \frac{(a-b)\theta}{2} + 8 \cos \frac{(a+b)\theta}{2} \cos \frac{(a-b)\theta}{2} \\
&= 1 + 8 \cos \frac{(a-b)\theta}{2} \left( 2 \cos \frac{(a-b)\theta}{2} + \cos \frac{(a+b)\theta}{2} \right) \\
&= 1 + 8 \cos \frac{(a-b)\theta}{2} \left( 2 \left( \cos \frac{a\theta}{2} \cos \frac{b\theta}{2} + \sin \frac{a\theta}{2} \sin \frac{b\theta}{2} \right) + \left( \cos \frac{a\theta}{2} \cos \frac{b\theta}{2} - \sin \frac{a\theta}{2} \sin \frac{b\theta}{2} \right) \right) \\
&= 1+ 8 \cos \frac{(a-b)\theta}{2} \left( 3 \cos \frac{a\theta}{2} \cos \frac{b\theta}{2} + \sin \frac{a\theta}{2} \sin \frac{b\theta}{2} \right)
\end{align*}}

Thus either $\cos \frac{(a-b)\theta}{2} = 0$ or $3 \cos \frac{a\theta}{2} \cos \frac{b\theta}{2} + \sin \frac{a\theta}{2} \sin \frac{b\theta}{2}=0$.

Since 
\begin{align*}
\xi^{a-b}+1 &= (1+\cos(a-b)\theta) + i\sin(a-b)\theta \\
&= 2\cos \frac{(a-b)\theta}{2} \left( \cos \frac{(a-b)\theta}{2} + i\sin \frac{(a-b)\theta}{2} \right),
\end{align*}
we have $\cos \frac{(a-b)\theta}{2} = 0$ if and only if $\xi^{a-b} = -1$ and $\xi^n = 1$.
This is in turn equivalent to $q$ being even, $a-b$ being an odd multiple of $\frac{q}{2}$, and $n$ being a multiple of $q$.
Also, in this case, $\frac{(a-b)\theta}{2}$ is an odd multiple of $\frac{\pi}{2}$ and we have $\tan \frac{a\theta}{2} \tan \frac{b\theta}{2} = -1$.

Meanwhile, $3 \cos \frac{a\theta}{2} \cos \frac{b\theta}{2} + \sin \frac{a\theta}{2} \sin \frac{b\theta}{2}=0$ if and only if
\begin{itemize}
    \item $\cos \frac{a\theta}{2} = 0$ and $\sin \frac{b\theta}{2} = 0$ or
    \item $\cos \frac{b\theta}{2} = 0$ and $\sin \frac{a\theta}{2} = 0$ or
    \item $\tan \frac{a\theta}{2} \tan \frac{b\theta}{2} = -3$.
\end{itemize}

In the first two cases, we have $\xi^{a-b} = -1$ and $\xi^n = 1$, which as pointed out above, is equivalent to $q$ being even, $a-b$ being an odd multiple of $\frac{q}{2}$, and $n$ being a multiple of $q$.

This reasoning shows that a root $\xi = e^{i\theta}$ of $P(t)$ that is a primitive $q^\text{th}$ root of unity is exactly one of the following two types:
\begin{enumerate}[label=(\roman*)]
    \item If $q$ is even, $a-b$ is an odd multiple of $\frac{q}{2}$, and $n$ is a multiple of $q$.
    \item If $\tan \frac{a\theta}{2} \tan \frac{b\theta}{2} = -3$.
\end{enumerate}

\begin{claim} \label{claim:5A1claim3}
Suppose there are no roots of type (ii). 
Then $P(t)$ cannot be skew-reciprocal up to cyclotomic factors. 
\end{claim}
\begin{proof}[Proof of \Cref{claim:5A1claim3}]
There is at least one root of type (i). In particular $n$ is even. Let $Q=\gcd(a-b,\frac{n}{2})$ and let $n=2NQ$. Without loss of generality suppose $a>b$ and let $a-b = AQ$.

We first compute the quotient $R(t)$ of $P(t)$ by $t^Q+1$:
\begin{align*}
P(t) &= t^{2NQ} - 2t^{AQ+b} - 2t^b - 1 \\
&= (t^{2NQ} - 1) - 2t^b (t^{AQ}+1).
\end{align*}
Hence 
$$R(t) = (t^{(2N-1)Q} - \cdots +t^Q -1) - 2t^b (t^{(A-1)Q} - \cdots -t^Q+1).$$
We then check that $R(t)$ has no root of unity as a root. The only possible roots satisfy $\xi^Q = \pm 1$, but
\begin{align*}
R(\xi) &= \begin{cases} -2N - 2\xi^bA & \text{if $\xi^Q=-1$} \\ -2\xi^b & \text{if $\xi^Q=1$} \end{cases} \\
&\neq 0.
\end{align*}
Here we used the fact that $\gcd(A,N)=1$ to see that the top expression is nonzero.

Finally, we check that $R(t)$ is not skew-reciprocal. If $Q >1$, then due to primitivity, $b \not\equiv 0$ (mod $Q)$ and the coefficients of the two terms occupy disjoint sets of degrees, so $R(t)$ cannot be skew-reciprocal unless $Q$ is even and $\frac{(A-1)Q}{2}+b = \frac{(2N-1)Q}{2}$. But in this case we have $a+b = n$ and we reduce to case (1) tackled above. If $Q=1$, then $R(t)$ has odd degree and cannot be skew-reciprocal.
\end{proof}

Hence we can suppose that there is a root $\xi = e^{i\theta}$ of type (ii). Say $\xi$ is a primitive $q^\text{th}$ root of unity, then we have $\tan \frac{a\pi}{q} \tan \frac{b\pi}{q} = -3$.
If $q$ were odd, then using Galois conjugation, we find another root of unity $\xi^2$ as a root of $P(t)$. Noting that $\xi^2$ must be of type (ii) as well, this leads to another equation
\begin{equation}
\label{eq:new:eq}
\tan \frac{2a\pi}{q} \tan \frac{2b\pi}{q} = -3.
\end{equation}

\begin{claim} \label{claim:5A1claim4}
If (\ref{eq:new:eq}) is satisfied for every type (ii) root, then $P(t)$ cannot be skew-reciprocal up to cyclotomic factors. 
\end{claim}

\begin{proof}[Proof of \Cref{claim:5A1claim4}]
Let us write $\alpha = \tan \frac{a\pi}{q}$ and $\beta=\tan \frac{b\pi}{q}$. Then we have $\alpha\beta=-3$ and $\frac{2\alpha}{1-\alpha^2}\frac{2\beta}{1-\beta^2} = -3$. Hence
$$4 = (1-\alpha^2)(1-\beta^{2}) = 1- (\alpha + \beta)^2 + 2\alpha  \beta + (\alpha\beta)^2 = 1 - (\alpha + \beta)^2 - 6 + 9.$$
This leads to $(\alpha + \beta)^2 = 0$, so $\alpha = - \beta = \pm \sqrt{3}$. Without loss of generality suppose $\tan \frac{a\pi}{q} = \alpha = \sqrt{3}$ and $\tan \frac{b\pi}{q} = \beta = -\sqrt{3}$. Then $\xi^a = e^{\frac{2ia\pi}{q}} = e^{\frac{2i\pi}{3}}$ and $\xi^b = e^{\frac{2ib\pi}{q}} = e^{\frac{4i\pi}{3}}$, and $\xi^n = 2\xi^a+2\xi^b+1 = -1$.

Since $\xi^{3a} = \xi^{3b} = \xi^{2n} = 1$, $q|\gcd(3a,3b,2n) \leq 6 \gcd (a,b,n) = 6$. In fact, since $\xi^a$ and $\xi^b$ are third roots of unity while $\xi^n$ is a second root of unity, we have $q=6$. In turn, $a=6A+2$, $b=6B+4$, and $n=6N+3$. In particular, since $n$ is odd, $P(t)$ cannot have any roots of type (i).

We first compute the quotient $R(t)$ of $P(t)$ by $t^2-t+1$:
\begin{align*}
P(t) &= t^{6N+3}-2t^{6A+2}-2t^{6B+4}-1 \\
&= (t^{6N}-1)t^3 - 2(t^{6A}-1)t^2 - 2(t^{6B}-1)t^4 - (2t^4-t^3+2t^2+1)
\end{align*}

Hence
\begin{align*}
R(t) &= (t^{6N-3}- \cdots +t^3-1)(t+1)t^3 - 2(t^{6A-3}- \cdots +t^3-1)(t+1)t^2 \\
&~- 2(t^{6B-3}- \cdots +t^3-1)(t+1)t^4 - (2t^2+t+1)
\end{align*}

We then check that $R(t)$ has no root of unity as a root. By the reasoning above, the only possible roots $\xi$ satisfy $\xi^3=-1$, but
\begin{align*}
R(\xi) &= 2N(\xi+1)-4A(\xi+1)\xi^2-4B(\xi+1)\xi^4-(2\xi^2+\xi+1) \\
&= 2N(\xi+1) - 4A(\xi-2) - 4B(-2\xi+1)-(3\xi-1) \\
&= (2N-4A+8B-3)\xi + (2N+8A-4B+1) \neq 0
\end{align*}
since $N,A,B$ are integers.

Finally, since $R(t)$ is of odd degree, it cannot be skew-reciprocal.
\end{proof}

Thus $q$ is even for at least some type (ii) root $\xi=e^{\frac{2i\pi}{q}}$. For this value of $q$, we set $q=2^m u$ with $u$ odd and $m \geq 1$. Since $u-2$ and $q=2^m u$ are coprime, $P(t)$ has the root of unity $\xi^{u-2}$ as a root, necessarily of type (ii).
We thus have a new relation $\tan a(u-2)\frac{\pi}{q} \tan b(u-2)\frac{\pi}{q} = -3$.

An elementary calculation gives $\tan a(u-2)\frac{\pi}{q} = \tan \left( \frac{a\pi}{2^m} - \frac{2a\pi}{q} \right)$ and $\tan b(u-2)\frac{\pi}{q} = \tan \left( \frac{b\pi}{2^m} - \frac{2b\pi}{q} \right)$. If $m=1$ and $a,b$ are even, one has $\tan \left( \frac{a\pi}{2^m} - \frac{2a\pi}{q} \right) = -\tan \frac{2a\pi}{q}$ and $\tan \left( \frac{b\pi}{2^m} - \frac{2b\pi}{q} \right) = -\tan \frac{2b\pi}{q}$. If this is the case for every type (ii) root with an even value of $q$, then we are reduced to the previous situation~\eqref{eq:new:eq} and we again run into a contradiction.
Hence we can assume that either $m=1$ and at least one of $a,b$ is odd, or $m>1$.

If $m=1$, $a$ is odd, and $b$ is even, one has $\tan \left( \frac{a\pi}{2^m} - \frac{2a\pi}{q} \right) = \frac{1}{\tan \frac{2a\pi}{q}}$
This gives
$$\frac{-\tan \frac{2b\pi}{q}}{\tan \frac{2a\pi}{q}} = -3,$$
or equivalently 
$$3\frac{2\alpha}{1-\alpha^2}=\frac{2\beta}{1-\beta^2}=\frac{-\frac{6}{\alpha}}{1-\frac{9}{\alpha^2}} = \frac{-6\alpha}{\alpha^2-9}$$
$$\frac{\alpha^2-9}{\alpha^2-1} = 1$$
which is absurd. Symmetrically, we also get a contradiction if $m=1$, $a$ is even, and $b$ is odd.

If $m=1$ and $a,b$ are odd, we have
$$\tan \frac{2a\pi}{q} \tan \frac{2b\pi}{q} = -\frac{1}{3},$$
or equivalently 
$$-3\frac{2\alpha}{1-\alpha^2}= \frac{1-\beta^2}{2\beta} = \frac{1-\frac{9}{\alpha^2}}{-\frac{6}{\alpha}} = \frac{\alpha^2-9}{-6\alpha}$$
$$36\alpha^2=(\alpha^2-9)(1-\alpha^2)$$
$$\alpha^4 + 26 \alpha^2 + 9 = 0$$
which is also absurd since $\alpha$ is a real number. 

Hence we can assume $m > 1$.
In this case we will arrive at a contradiction by considering the number fields $\QQ(e^{\frac{2i\pi}{2^{m}u}})$ and $\QQ(e^{\frac{2i\pi}{2^{m-1}u}})$. The degrees of these number fields are, respectively, $\varphi(2^{m}u)=2^{m-1}\varphi(u)$ and $\varphi(2^{m-1}u)=2^{m-2}\varphi(u)$, hence the latter is strictly contained in the former.

Now by primitiveness, at least one of $n,a,b$ is odd. By grouping up the terms in (\ref{eq:roots:unity}) involving an odd power of $\xi$, we would be able to express $\xi = e^{\frac{2i\pi}{2^{m}u}}$ as an element of $\QQ(e^{\frac{2i\pi}{2^{m-1}u}})$, leaving us with a contradiction, unless if $n$ is even, $a$ and $b$ are odd, and $\xi^n - 1 = 0$ and $\xi^a+\xi^b = 0$. But then $\xi$ would be of type (i), contradicting what we have assumed above.

\subsection*{Case $A^*_2$.} In this case~$Q(t)=1-t^a-t^b-t^c+t^{a+b},$ where~$c$ is the weight on the isolated vertex. As in case $2A_1$, $n$ must be even. Let us write $n=2g$. There are two cases to consider: either~$a+b = 2g$ or~$c=2g$. 

In the former case, the characteristic polynomial is of the form~$P(t)=t^{2g}-t^{2g-a}-t^c-t^{a}+1.$ The only way to satisfy the parity condition of \Cref{lemma:parity} is if~$c=g$. In this case, the polynomial is actually reciprocal. In particular, the normalized spectral radius of the matrix~$A$ is bounded from below by~$\mu^4 > 3+2\sqrt{2}$ by McMullen's analysis~\cite[Section~7]{McM15}. 

In the latter case, the characteristic polynomial is of the form~$P(t)=t^{2g}-t^a-t^b+t^{a+b-2g}-1.$ There are three coefficients, so to satisfy the parity condition of \Cref{lemma:parity} at least one of them must be the middle coefficient: either~$a=g$,~$b=g$ or~$a+b=3g$.
\begin{enumerate}
    \item[(i)] If~$a=g$, then we get characteristic polynomials of the form~$P(t)=t^{2g}-t^b-t^g+t^{b-g}-1.$ In order to satisfy the parity condition of \Cref{lemma:parity}, we must have~$(b-g) +b = 2g$, that is, $g$ is even and $b=\frac{3g}{2}$. If $g \geq 4$ then $P(t)$ is a polynomial in~$t^\frac{g}{2},$ hence cannot be the characteristic polynomial of a primitive matrix. If $g=2$ then one can check that $P(t) = t^4-t^3-t^2+t-1$ is not skew-reciprocal up to cyclotomic factors directly.
    \item[(ii)] If~$b=g$, we can repeat the same argument as for~$a=g$.
    \item[(iii)] If~$a+b=3g$, then we get characteristic polynomials of the form~$P(t)=t^{2g}-t^a-t^b+t^g-1$. The only way to satisfy the parity condition of \Cref{lemma:parity} is if~$a=b$, i.e. if $g$ is even and $a=b=\frac{3g}{2}$. If $g \geq 4$ then $P(t)$ is a polynomial in~$t^\frac{g}{2},$ hence cannot be the characteristic polynomial of a primitive matrix. If $g=2$ then $P(t) = t^4-2t^3+t^2-1=(t^2-t-1)(t^2-t+1)$, thus $\rho(A)^n = \mu^4 > 3+2\sqrt{2}$. 
\end{enumerate}

The proof of Theorem~\ref{thm:skucspectralradius} is now complete.
\qed

As mentioned below the statement of \Cref{thm:skucspectralradius}, \Cref{thm:standardlyembeddedtt}, \Cref{prop:orirevpamaptononcycloskewreciprocal}, and \Cref{thm:skucspectralradius} together imply the inequality in \Cref{thm:mainthm}.

\section{Sharpness of \Cref{thm:mainthm}}
\label{sec:sharpness}

In this section, we demonstrate the asymptotic sharpness of \Cref{thm:mainthm}. 

Given an integer $k \geq 2$, we let $S_k$ be the surface 
$$\left( S^1 \times (-1,1) \right) \backslash \left\{ (\exp{\frac{ai \pi}{k}},0) \mid a \in \mathbb{Z}/2k \right\}.$$
That is, we puncture $2k$ points from an open annulus in a cyclic fashion. See \Cref{fig:sharpnesseg}. Note that $\chi(S_k)=-2k$.

\begin{figure}
    \centering
    \fontsize{6pt}{6pt}\selectfont
    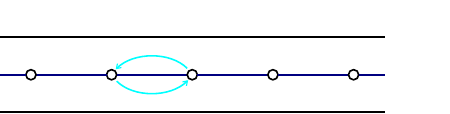
    \caption{The surface $S_k$ and several maps on $S_k$.}
    \label{fig:sharpnesseg}
\end{figure}

We define
$$p_k = \begin{cases}
k+1 & \text{if $k$ is even} \\
k+2 & \text{if $k$ is odd}
\end{cases}$$
Note that $p_k$ and $2k$ are relatively prime, hence we can set $q_k$ to be an inverse of $p_k \mod 2k$.

We set up some maps on $S_k$ which we will use to define $f_k$: We denote by $a_n$ be the arc $\left\{ (\exp{i\theta},0) \mid \theta \in (\frac{nq_k \pi}{k},\frac{(nq_k+1) \pi}{k}) \right\}$, and denote by $\sigma_n$ the positive half-twist around $a_n$. We denote by $t$ the translation by $q_k$ units, i.e. $t(\exp{i\theta}, r) = (\exp{i(\theta+\frac{q_k \pi}{k})},r)$. Note that $t$ maps each arc $a_n$ to $a_{n+1}$. Finally, we denote by $\eta$ the reflection across $S^1 \times \{0\}$, i.e. $\eta(\exp{i\theta}, r) = (\exp{i\theta}, -r)$. See \Cref{fig:sharpnesseg}.

We then define $f_k$ to be the composition $\eta t \sigma_0$.

\begin{prop} \label{prop:sharpnesseg}
The map $f_k$ is an orientation-reversing fully-punctured pseudo-Anosov map with at least two puncture orbits. The stretch factor of $f_k$ is the largest root of
$$\begin{cases}
t^{2k}-t^{k+1}-t^{k-1}-1 & \text{if $k$ is even} \\
t^{2k}-t^{k+2}-t^{k-2}-1 & \text{if $k$ is odd.} 
\end{cases}$$
\end{prop}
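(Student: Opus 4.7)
The plan is to verify the claims in sequence. For orientation: $\eta$ is orientation-reversing while $t, \sigma_0$ are orientation-preserving, so $f_k = \eta t \sigma_0$ is orientation-reversing. For puncture orbits, each of $\sigma_0, t, \eta$ preserves the partition of the $2k+2$ punctures of $S_k$ into the two boundary punctures (the ends of the open annulus) and the $2k$ interior punctures on the equator $S^1 \times \{0\}$. Since $\eta$ swaps the boundary punctures while $t, \sigma_0$ fix them, the boundary punctures form a single $f_k$-orbit, distinct from any orbit of interior punctures, yielding at least two puncture orbits.

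For the pseudo-Anosov and stretch factor claims, I construct an $f_k$-invariant standardly embedded train track $\tau_k$ on $S_k$ whose $2k$ real edges are precisely the arcs $\{a_n\}_{n \in \ZZ/2k\ZZ}$, with a bigon infinitesimal polygon (two horizontal prongs) around each interior puncture and the two boundary punctures declared as outer boundary components. Verifying $f_k$-invariance requires exhibiting a factorization of $f_k$ into subdivision and folding moves as per \Cref{thm:standardlyembeddedtt}, which I do by describing the moves induced by each of $\sigma_0, t, \eta$ separately. The reflection $\eta$ fixes the equator pointwise and so acts as the identity on real edges (it swaps the two infinitesimal edges of each bigon across the equator). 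The translation $t$ is a train track isomorphism cyclically shifting the arcs, so $t_*(a_n) = a_{n+1}$. The half-twist $\sigma_0$, supported near $a_0$, fixes $a_0$ setwise and acts trivially on any $a_n$ not incident to punctures $0$ or $1$; for the remaining arcs $a_{\pm p_k}$ the half-twist moves one endpoint across $a_0$, giving $\sigma_{0*}(a_{\pm p_k}) = a_0 + a_{\pm p_k}$ in the train track sense.

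Combining yields the real transition matrix in the basis $(a_0, \ldots, a_{2k-1})$:
\[
f_{k*}^{\real} = t_* + e_1\bigl(e_{p_k}^T + e_{-p_k}^T\bigr),
\]
where $t_*$ is the cyclic shift matrix $(t_*)_{m,n} = \delta_{m, n+1 \bmod 2k}$. Applying the matrix determinant lemma with $\det(\lambda I - t_*) = \lambda^{2k} - 1$ and solving $(\lambda I - t_*)x = e_1$ explicitly yields the characteristic polynomial $\lambda^{2k} - \lambda^{p_k} - \lambda^{2k - p_k} - 1$, which specializes to the two stated polynomials upon substituting $p_k = k + 1$ (even $k$) or $p_k = k + 2$ (odd $k$).

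The matrix $f_{k*}^{\real}$ is primitive, since its associated directed graph contains cycles of lengths $2k, p_k, 2k - p_k$ with $\gcd$ equal to $1$ because $\gcd(p_k, 2k) = 1$ by construction of $q_k$. Perron--Frobenius then identifies the stretch factor of $f_k$ as the largest real root of the polynomial, and the map is fully-punctured since the invariant foliations carried by $\tau_k$ have singularities only at punctures (by design). The main obstacle lies in the geometric verification that $\tau_k$ is $f_k$-invariant as a standardly embedded train track---particularly the description of $\sigma_0$ as a composition of subdivision and folding moves in a neighborhood of $a_0$---since this is what justifies the explicit action on real edges used above.
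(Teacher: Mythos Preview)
Your approach is genuinely different from the paper's. The paper passes to the double branched cover $\widehat{S_k}$ of $S_k$, branched over the $2k$ interior punctures: there the arcs $a_n$ lift to simple closed curves $c_n$, the half-twist $\sigma_0$ lifts to a full Dehn twist $\widehat{\sigma_0}$ around $c_0$, and the paper then invokes the Penner-type arguments of \cite{LS20} on the train track obtained by smoothing $\bigcup c_n$. The action of $\widehat{f_k}$ on the span of $\{c_n\}$ inside the weight space gives exactly the matrix $P+N$ you found, and primitivity of that matrix yields the pseudo-Anosov property and the stretch factor. You instead stay on $S_k$ and use the arcs $a_n$ themselves as real edges of a standardly embedded train track. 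Both routes land on the same companion-type matrix $t_* + e_1(e_{p_k}^T+e_{-p_k}^T)$ and hence the same characteristic polynomial; your computation of that polynomial and your primitivity argument via $\gcd(2k,p_k,2k-p_k)=1$ are correct.

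Two points deserve attention. First, the gap you flag at the end is real: you need $\sigma_0$ to carry $\tau_k$ to a train track carried by $\tau_k$ (up to isotopy) via folds, and with bigon infinitesimal polygons this requires some care about which vertex of each bigon the neighboring arcs attach to and how the half-twist drags $a_{\pm p_k}$ across $a_0$. The paper sidesteps this entirely by lifting to the cover, where one is dealing with Dehn twists along curves and the carrying is standard. Second, your logic runs in the opposite direction to \Cref{thm:standardlyembeddedtt}: that theorem starts from a pseudo-Anosov map and produces the invariant train track, whereas you want to deduce pseudo-Anosov-ness from having an invariant train track with primitive real transition matrix. That converse is of course true (it is the usual Perron--Frobenius/Bestvina--Handel criterion), but it is not what \Cref{thm:standardlyembeddedtt} states, so you should cite it separately rather than appeal to \Cref{thm:standardlyembeddedtt}. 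The paper's double-cover route buys a clean citation to \cite{LS20} for exactly this step.
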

\begin{proof}
It is clear that $f_k$ is orientation-reversing and has at least two puncture orbits. To show the rest of the properties, we will pass to a double branched cover.

Consider the double cover of $S_k$ determined by sending loops around each puncture $(\exp{\frac{a i\pi}{k}},0)$ to $1 \in \mathbb{Z}/2$ and sending the loops $S^1 \times \{r\}$ to $0 \in \mathbb{Z}/2$. We let $\widehat{S_k}$ be the surface obtained by filling in the punctures lying above each $(\exp{\frac{a i\pi}{k}},0)$. Note that each arc $a_n$ lifts to a closed curve $c_n$ in $\widehat{S_k}$. It is convenient to think of $\widehat{S_k}$ as a chain of $2k$ bands with cores $c_n$. See \Cref{fig:sharpnessdoublecover}. Note that for each $n$, $c_n$ intersects each of $c_{n+p_k}$ and $c_{n-p_k}$ once, and intersects those curves only.

Each positive half-twist $\sigma_n$ on $S_k$ lifts to a positive Dehn twist $\widehat{\sigma_n}$ around $c_n$. The translation $t$ lifts to the translation $\widehat{t}$ that sends each curve $c_n$ to $c_{n+1}$. The reflection $\eta$ lifts to a reflection $\widehat{\eta}$ which preserves each $c_n$. Thus $f_k$ lifts to the map $\widehat{f_k} = \widehat{\eta} \widehat{t} \widehat{\sigma_0}$. It suffices to show that $\widehat{f_k}$ is a fully-punctured pseudo-Anosov map with stretch factor as recorded in \Cref{prop:sharpnesseg}.

Our proof for this is a copy of the arguments in \cite{LS20}. In fact, for $k$ even, it is easy to see that $\widehat{f_k}$ is exactly the map denoted by $\psi_k$ in \cite[Section 3]{LS20}.

Let $\tau_k$ be the train track obtained by taking the union of the $c_n$ and smoothing according to their orientations. Note that $\tau_k$ is a deformation retract of $\widehat{S_k}$. Also, note that each $c_n$ is carried by $\tau_k$, that is, each of $c_n$ determines an element of the weight space $\mathcal{W}(\tau_k)$, which we denote by $c_n$ as well. Observe that the set $\{c_n\}$ is linearly independent, and $(f_k)_*$ preserves the subspace $U$ spanned by $\{c_n\}$. In fact, under the basis $(c_1,...,c_{2k})$, the action of $(f_k)_*$ is represented by the matrix $P+N$ where
$$P_{ij} = 
\begin{cases}
1 & \text{if $i=j+1$} \\
0 & \text{otherwise}
\end{cases}$$
and
$$N_{ij} = 
\begin{cases}
1 & \text{if $(i,j) = (1,p_k)$ or $(1,-p_k)$} \\
0 & \text{otherwise}
\end{cases}$$
where we regard the values of the indices mod $2k$.

\begin{figure}
    \centering
\begingroup%
  \makeatletter%
  \providecommand\color[2][]{%
    \errmessage{(Inkscape) Color is used for the text in Inkscape, but the package 'color.sty' is not loaded}%
    \renewcommand\color[2][]{}%
  }%
  \providecommand\transparent[1]{%
    \errmessage{(Inkscape) Transparency is used (non-zero) for the text in Inkscape, but the package 'transparent.sty' is not loaded}%
    \renewcommand\transparent[1]{}%
  }%
  \providecommand\rotatebox[2]{#2}%
  \newcommand*\fsize{\dimexpr\f@size pt\relax}%
  \newcommand*\lineheight[1]{\fontsize{\fsize}{#1\fsize}\selectfont}%
  \ifx\svgwidth\undefined%
    \setlength{\unitlength}{162.46609065bp}%
    \ifx\svgscale\undefined%
      \relax%
    \else%
      \setlength{\unitlength}{\unitlength * \real{\svgscale}}%
    \fi%
  \else%
    \setlength{\unitlength}{\svgwidth}%
  \fi%
  \global\let\svgwidth\undefined%
  \global\let\svgscale\undefined%
  \makeatother%
  \begin{picture}(1,0.80777953)%
    \lineheight{1}%
    \setlength\tabcolsep{0pt}%
    \put(0,0){\includegraphics[width=\unitlength,page=1]{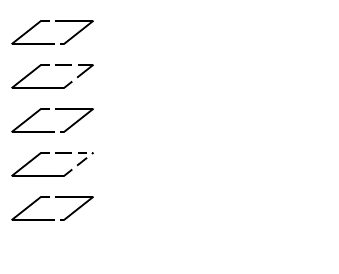}}%
    \put(0.12218852,0.00706884){\color[rgb]{0,0,0}\makebox(0,0)[lt]{\lineheight{1.25}\smash{\begin{tabular}[t]{l}$\widehat{S}_k$\end{tabular}}}}%
    \put(0.56337693,0.00706098){\color[rgb]{0,0,0}\makebox(0,0)[lt]{\lineheight{1.25}\smash{\begin{tabular}[t]{l}$S_k$\end{tabular}}}}%
    \put(0,0){\includegraphics[width=\unitlength,page=2]{sharpnessdoublecover.pdf}}%
    \put(0.32554457,0.38543335){\color[rgb]{0,0,0.50196078}\makebox(0,0)[lt]{\lineheight{1.25}\smash{\begin{tabular}[t]{l}$c_0$\end{tabular}}}}%
    \put(0.7729336,0.38543861){\color[rgb]{0,0,0.50196078}\makebox(0,0)[lt]{\lineheight{1.25}\smash{\begin{tabular}[t]{l}$a_0$\end{tabular}}}}%
    \put(0,0){\includegraphics[width=\unitlength,page=3]{sharpnessdoublecover.pdf}}%
  \end{picture}%
\endgroup%

    \caption{The double branched cover $\widehat{S_k}$ of $S_k$.}
    \label{fig:sharpnessdoublecover}
\end{figure}

It is straightforward to check that $P+N$ is primitive, thus $\widehat{f_k}$ is a fully-punctured pseudo-Anosov map and $\tau_k$ fully carries the unstable foliation of $\widehat{f_k}$.

Using the fact that $P+N$ is a companion matrix, one computes the characteristic polynomial of $P+N$ to be 
$$t^{2k}-t^{p_k} - t^{2k-p_k}-1 = \begin{cases}
t^{2k}-t^{k+1}-t^{k-1}-1 & \text{if $k$ is even} \\
t^{2k}-t^{k+2}-t^{k-2}-1 & \text{if $k$ is odd.}
\end{cases}$$
Thus the spectral radius of $P+N$ is the largest root of this polynomial. Applying the Perron-Frobenius theorem we conclude that the stretch factor of $\widehat{f_k}$ is this largest root as well.
\end{proof}

The normalized stretch factor $P_k = \lambda(f_k)^{2k}$ of $f_k$ is in turn the largest root of the equation
$$\begin{cases}
P_k-(P_k^{\frac{1}{2k}}+P_k^{-\frac{1}{2k}})P_k^{\frac{1}{2}}-1 = 0 & \text{if $k$ is even} \\
P_k-(P_k^{\frac{1}{k}}+P_k^{-\frac{1}{k}})P_k^{\frac{1}{2}}-1 = 0 & \text{if $k$ is odd.} 
\end{cases}$$
As $k \to \infty$, both families of equations converge to $P-2P^{\frac{1}{2}}-1=0$, whose largest root is $3+2\sqrt{2}$. Thus we conclude that $\lim_{k \to \infty} P_k = 3+2\sqrt{2}$.

\section{Proof of \Cref{thm:set}} \label{sec:thmsetproof}

We explain the proof of \Cref{thm:set} in this section. The strategy is much simpler than that of \Cref{thm:mainthm} and essentially consists of determining, among the orientation-preserving fully-punctured pseudo-Anosov maps that have normalized stretch factor strictly less than~$\mu^4$, which ones admit an orientation-reversing square root. 

The set of normalized stretch factors of such maps is determined by Tsang~\cite[Table 1]{Tsa23} and consists of 
\begin{enumerate}
\item four quadratic stretch factors, $\mu^2,\frac{4+\sqrt{12}}{2},\frac{5+\sqrt{21}}{2},\sigma^2$, given by linear Anosov homeomorphisms on the punctured torus $S_{1,1}$ with Euler characteristic $-1$,
\item two normalized stretch factors $(\text{Lehmer’s number})^9 \approx 4.311$, realized on $S_{5,1}$, and $|\mathrm{LT_{1,2}}|^3 \approx 5.107$, realized  on $S_{2,1}$.
\end{enumerate}
We immediately see that $\mu$ and $\sigma$ are normalized stretch factors of fully-punctured orientation-reversing pseudo-Anosov maps, given, respectively, by matrices $\left(\begin{smallmatrix} 1 & 1 \\ 1 & 0 \end{smallmatrix} \right),\left(\begin{smallmatrix} 2 & 1 \\ 1 & 0 \end{smallmatrix} \right)\in \mathrm{GL}_2(\ZZ)$ on $S_{1,1}$.
Meanwhile, the normalized stretch factor $\mu^2$ is realized on the punctured sphere $S_{0,4}$ (see \Cref{rmk:2A_1lowdeg}).

For the other two  quadratic stretch factors $\frac{4+\sqrt{12}}{2}$ and $\frac{5+\sqrt{21}}{2}$, one can check that the minimal polynomial of their square roots are, respectively,
$x^4 - 4x^2 + 1$ and $x^4 - 5x^2 + 1$. Since the degree is not $2$, they cannot be the normalized stretch factor of an orientation-reversing Anosov map on $S_{1,1}$.

On the other hand, Lehmer’s number, respectively $|\mathrm{LT_{1,2}}|$, is the largest real root of the polynomial $t^{10} +t^9 -t^7 -t^6 -t^5 -t^4 -t^3 +t+1$, respectively $t^4-t^3-t^2-t+1$.
They are in particular Salem numbers (all roots different from $\lambda^{\pm 1}$ have modulus one). By a result of Strenner and Liechti~\cite[Theorem 1.10]{LS20}, the Galois conjugates of the stretch factor of an orientation-reversing pseudo-Anosov map cannot lie on the unit circle.

Finally, the fact that the spectrum contains a dense subset of $[\sigma^2,\infty)$ follows from the examples in \Cref{sec:sharpness} (see the discussion in~\cite[Section 6.1]{Tsa23}).

\section{Discussion and further questions}
\label{sec:questions}

As mentioned in the introduction, \Cref{thm:mainthm} is motivated by \cite{HT22}. However, the statement of \Cref{thm:mainthm} is less satisfying than the results in \cite{HT22} in two aspects:
\begin{itemize}
    \item \cite{HT22} determines the actual value of the minimal normalized stretch factor on surfaces with Euler characteristic $-2k$, for each $k$. Meanwhile, \Cref{thm:mainthm} only determines the asymptotic value of the minimal values in the orientation-reversing case, as $k \to \infty$. 
    \item \cite{HT22} shows that the minimal normalized stretch factors on surfaces with odd Euler characteristic are greater than those on surfaces with even Euler characteristic. It is conceivable that this remains true in the orientation-reversing case, especially since we can only prove that \Cref{thm:mainthm} is sharp for surfaces with even Euler characteristic, but for now we do not know for sure.
\end{itemize}

Regarding the first point, we conjecture the following.

\begin{conj} \label{conj:exactvalue}
Let~$f:S \to S$ be an orientation-reversing fully-punctured pseudo-Anosov map on a finite-type orientable surface. Suppose $-\chi(S) = 2k \geq 4$ and $f$ has at least two puncture orbits. Then the normalized stretch factor of~$f$ is greater than or equal to the largest real root of 
$$\begin{cases}
t^{2k}-t^{k+1}-t^{k-1}-1 & \text{if $k$ is even} \\
t^{2k}-t^{k+2}-t^{k-2}-1 & \text{if $k$ is odd.}
\end{cases}$$
\end{conj}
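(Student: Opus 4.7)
The plan is to strengthen the case-by-case analysis of \Cref{section:curvegraphanalysis} by imposing a further Perron--Frobenius constraint on the real transition matrix $f_*^\real$. Since $f_*^\real$ is a non-negative primitive matrix, it admits a unique eigenvalue of maximal modulus; consequently, its characteristic polynomial cannot be written as $Q(t^m)$ for any $m \geq 2$. Equivalently, the greatest common divisor of the exponents appearing in the characteristic polynomial must equal $1$. This additional filter is the main new tool on top of the analysis already present in \Cref{section:curvegraphanalysis}.

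The principal subcase, which dictated the bound $\sigma^2$ in \Cref{thm:mainthm}, is Case $3A_1$(1) with $P(t) = t^{2k} - t^{k+d} - t^{k-d} - 1$. Its largest root is strictly increasing in $d$, so one looks for the smallest admissible value. A direct inspection of the exponents shows: (a) at $d = 0$, $P(t)$ is a polynomial in $t^k$, excluded by primitivity for $k \geq 2$; (b) for $d \geq 1$, $P(t)$ is a polynomial in $t^2$ precisely when $k$ and $d$ have the same parity, which is likewise excluded. The smallest remaining $d$ is therefore $d = 1$ when $k$ is even and $d = 2$ when $k$ is odd, giving exactly the polynomials in the conjecture; skew-reciprocity of $P(t)$ is then verified by direct substitution, and \Cref{prop:sharpnesseg} exhibits orientation-reversing pseudo-Anosov maps realizing these polynomials, so the bound is attained.

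The remaining cases must be shown to produce stretch factors no smaller than the conjectured value. Case $2A_1$ is already handled in the proof of \Cref{thm:mainthm}. For Case $3A_1$(2), $P(t) = t^{2k} - 2t^a - 1$, primitivity forces $\gcd(2k, a) = 1$; adapting the root-of-unity computation from \Cref{section:curvegraphanalysis} to the even-$n$ setting shows that $\xi^{2k} = \xi^a = -1$ with $\gcd(2k, a) = 1$ has no solution, while the skew-reciprocal subcase forces $a = k$ and hence $P(t) = t^{2k} - 2t^k - 1$, excluded by the polynomial-in-$t^k$ obstruction. Cases $4A_1$ and $5A_1$ already give lower bounds of at least $(\tfrac{3+\sqrt{13}}{2})^2 \approx 10.9$ and $(2+\sqrt{5})^2 \approx 17.9$ respectively, both of which comfortably dominate the conjectured value (which stays below $8.2$ for all $k \geq 2$, with its maximum attained at $k=3$). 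Finally, in Case $A^*_2$ the only primitive sub-case arises at $g = 2$, i.e.\ $k = 2$, where the polynomial $(t^2 - t - 1)(t^2 - t + 1)$ gives normalized stretch factor $\mu^4$, coinciding with the conjectured value at $k = 2$.

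The main obstacle I foresee is the combinatorial bookkeeping in the secondary sub-cases of $4A_1$(2) and $5A_1$(2),(3), where \Cref{section:curvegraphanalysis} already splits into many branches based on cyclotomic factors. While the Perron--Frobenius filter above should rule out polynomials in $t^m$ uniformly, one must check that no exceptional ``skew-reciprocal up to cyclotomic factors'' polynomial slips through and yields a stretch factor below the conjectured value for some small $k$. A cleaner approach would be to package the primitivity constraint as a direct algebraic condition on the coefficients (in the spirit of \Cref{lemma:parity}), which would then combine uniformly with the existing parity analysis.
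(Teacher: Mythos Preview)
The statement you are attempting to prove is \emph{Conjecture}~\ref{conj:exactvalue}; the paper does not prove it, and in fact \Cref{sec:questions} explains precisely why the approach you outline is insufficient as stated.

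The core gap is this: \Cref{prop:smallcurvegraphs} lists only those curve graphs that can arise when $\rho(A)^n < \sigma^2 = 3+2\sqrt{2}$. The conjectured lower bounds, however, are strictly \emph{greater} than $\sigma^2$ for every $k\geq 2$ (they approach $\sigma^2$ from above as $k\to\infty$, and for $k=3$ the value is approximately $8.186$). To establish the conjecture you must rule out every primitive matrix, skew-reciprocal up to cyclotomic factors, whose normalized spectral radius lies in the interval $[\sigma^2,\ \text{conjectured value})$. This interval is nonempty, and matrices with $\rho(A)^n$ in this range can have curve graphs \emph{other} than the five types $2A_1,\dots,5A_1,A_2^*$ --- at a minimum the graphs $A_2^{**}$ and $A_3^*$ mentioned in \Cref{sec:questions}, and for $k=3$ even graphs beyond McMullen's published list, which stops at growth rate $8$. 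Your proposal never touches these additional curve graphs, so the case analysis is incomplete.

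Your treatment of Case~$3A_1$(1) is correct and is a genuine refinement: the primitivity filter does pin down the minimal admissible $d$ within that one-parameter family, and this correctly reproduces the conjectured polynomials. But this only shows those polynomials are minimal \emph{among polynomials of that particular shape}; it does not show they are minimal among all admissible characteristic polynomials. Similarly, your remarks that Cases~$4A_1$ and~$5A_1$ give bounds above $10.9$ and $17.9$ are in the wrong direction: those numbers are the minimal growth rates \emph{within} those two families, not upper bounds on the gap you need to close. Even within the five listed cases there is a residual issue: the reciprocal sub-case of $A_2^*$ is only bounded below by $\mu^4\approx 6.854$ in the paper, which is below the conjectured value $\approx 8.186$ at $k=3$, so that sub-case would also require further work.
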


In other words, we conjecture that the examples we demonstrated in \Cref{sec:sharpness} attain the minimal normalized stretch factor among orientation-reversing fully-punctured pseudo-Anosov maps defined on surfaces with even Euler characteristic and with at least two puncture orbits.

Theoretically, one should be able to verify \Cref{conj:exactvalue}, and, separately, address the second point above, by following the approach in this paper. The additional work lies in extending the analysis in \Cref{section:curvegraphanalysis} to a larger collection of curve graphs. 

The main deterrent for doing so is the anticipated complexity of the analysis. We currently do not have many tools for identifying polynomials that are skew-reciprocal up to cyclotomic factors. Our parity condition \Cref{lemma:parity} is a rough necessary condition that helps reduce the analysis into finitely many families, but the number of parameters of these families grows as the number of terms in the characteristic polynomial increases. The characteristic polynomial in case $5A_1$ has 6 terms and the analysis is already quite involved. For the curve graphs $A^{**}_2$ and $A^*_3$, which have the next largest minimal growth rate, the characteristic polynomial will have $6$ and $7$ terms respectively, and if one repeats our strategy on these cases the analysis is expected to be even more unwieldy. 

Worse still, for $k=3$, one would have to extend the analysis to all curve graphs with growth rate $<8.186$. We currently do not have a list of such graphs, since McMullen \cite{McM15} only compiled all graphs with minimum growth rate $\leq 8$.
Note that this issue also arises in \cite{Lie23}, where additional techniques had to be employed to compute the minimal spectral radius for primitive skew-reciprocal $6$-by-$6$ matrices.

In conclusion, it would be wise to develop a deeper understanding of polynomials that are skew-reciprocal up to cyclotomic factors before attempting to generalize our analysis by brute force.

We now turn to address \Cref{thm:set}. An obvious future direction regarding this theorem is to understand the behavior of normalized stretch factors in the range $(\mu^2, \sigma^2)$. We make the following conjecture.

\begin{conj} \label{conj:minaccum}
The minimal accumulation point of the set of normalized stretch factors of orientation-reversing fully-punctured pseudo-Anosov maps is $\sigma^2$.
\end{conj}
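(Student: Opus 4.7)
The conjecture decomposes into two parts: (i) $\sigma^2$ is an accumulation point, and (ii) no value in $[\mu^2,\sigma^2)$ is an accumulation point. Part (i) is immediate from \Cref{sec:sharpness}: by \Cref{prop:sharpnesseg}, the sequence of maps $f_k$ produces distinct normalized stretch factors strictly greater than $\sigma^2$ that converge to $\sigma^2$, so $\sigma^2$ is an accumulation point of the spectrum. The substantive work lies in (ii), and my plan is to follow the template already used for \Cref{thm:set}: combine the curve-graph analysis of \Cref{section:curvegraphanalysis} with the unit-circle obstruction of Strenner and the second author \cite[Theorem 1.10]{LS20} that rules out Galois conjugates of an orientation-reversing pseudo-Anosov stretch factor on the unit circle.

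Concretely, suppose for contradiction that $\alpha\in[\mu^2,\sigma^2)$ is an accumulation point. Pick orientation-reversing fully-punctured pseudo-Anosov maps $g_k:S_k\to S_k$ with at least two puncture orbits whose normalized stretch factors converge to $\alpha$. By \Cref{thm:standardlyembeddedtt} and \Cref{prop:orirevpamaptononcycloskewreciprocal}, each $g_k$ produces a primitive matrix $A_k=(g_k)_*^\real$ with $\rho(A_k)^{-\chi(S_k)}<\sigma^2<8$ whose characteristic polynomial is skew-reciprocal up to cyclotomic factors. The first step is to enlarge McMullen's list \cite{McM15} of curve graphs of growth rate below $8$ to cover all curve graphs arising in this range (the relevant families beyond those in \Cref{prop:smallcurvegraphs} are $A_2^{\ast\ast}$ and $A_3^\ast$, and possibly a small number of additional cases); then, for each family, list the possible clique polynomials compatible with skew-reciprocity up to cyclotomic factors, using \Cref{lemma:parity} as a first sieve.

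The second step is to carry out, family by family, the same style of analysis as in \Cref{section:curvegraphanalysis}: a monotonicity argument in the free parameters (via the signs of partial derivatives $\partial_s h$ of a single-variable polynomial deformation, as in cases $3A_1$ and $4A_1$) shows that the normalized spectral radius is minimized at a boundary configuration; and in the remaining low-parameter boundary cases, one either directly checks that $\rho^{-\chi}\geq \sigma^2$, or else shows that the only polynomials achieving smaller values are exactly those which, after extracting cyclotomic factors, produce a Galois conjugate of $\lambda(g_k)$ on the unit circle. The unit-circle obstruction of \cite[Theorem 1.10]{LS20} then rules out realization by orientation-reversing pseudo-Anosov maps, leaving only finitely many admissible polynomials below $\sigma^2$ and contradicting the accumulation.

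\textbf{Main obstacle.} The hard step is extending the algebraic bookkeeping of \Cref{section:curvegraphanalysis} to the families $A_2^{\ast\ast}$ and $A_3^\ast$, whose clique polynomials involve six or seven terms. The number of parameter regimes compatible with \Cref{lemma:parity} grows rapidly, the cyclotomic-root casework (analogous to the $q=3$ and $q=6$ subcases in $4A_1$) becomes correspondingly intricate, and McMullen's classification is only known up to growth rate $8$, so even the list of candidate graphs is not complete in the literature. In my view, the essential prerequisite is a sharper algebraic criterion detecting skew-reciprocity-up-to-cyclotomic-factors from the coefficients of a polynomial, strong enough to rule out entire parameter families at once rather than coefficient-by-coefficient; with such a tool in hand, the resulting finiteness statement, combined with \cite[Theorem 1.10]{LS20}, should close the conjecture.
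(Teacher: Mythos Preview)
The statement is a \emph{conjecture}; the paper does not prove it and explicitly says the present approach is inadequate. So there is no paper proof to compare against, only the paper's discussion of obstructions and its suggested alternative line of attack (veering branched surfaces on non-orientable mapping tori).

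Your plan has a genuine gap at the very first step of part (ii). You write: ``Pick orientation-reversing fully-punctured pseudo-Anosov maps $g_k$ \emph{with at least two puncture orbits} whose normalized stretch factors converge to $\alpha$.'' But the conjecture carries no such hypothesis, and the paper stresses that the whole point of \Cref{conj:minaccum} is precisely to remove it (see the sentence immediately following the conjecture). A sequence witnessing accumulation below $\sigma^2$ could consist entirely of maps with a single puncture orbit, and for such maps neither \Cref{thm:standardlyembeddedtt} nor \Cref{prop:orirevpamaptononcycloskewreciprocal} is available: the standardly embedded train track construction requires a nonempty $\mathcal{X}_I$ and $\mathcal{X}_O$. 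So the curve-graph machinery you propose to extend simply does not engage with the case that carries the content of the conjecture.

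Worse, under your added hypothesis the conjecture is already essentially settled by what the paper proves, so your proposed extension to $A_2^{**}$ and $A_3^*$ is beside the point. If the $g_k$ have at least two puncture orbits and $-\chi(S_k)\geq 4$, \Cref{thm:mainthm} forces the normalized stretch factor to be $\geq\sigma^2$; hence any accumulation below $\sigma^2$ would come from infinitely many $g_k$ on the finitely many surfaces with $-\chi\leq 3$, and on a fixed finite-type surface the set of pseudo-Anosov stretch factors is closed and discrete (finitely many conjugacy classes below any dilatation bound), so no accumulation occurs. Thus the substantive open problem is exactly the one-puncture-orbit case that your plan excludes; the paper's own suggestion is to bound the complexity of a veering branched surface on the (non-orientable) mapping torus rather than to push the curve-graph analysis further.
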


In other words, there are only finitely many isolated values for the normalized stretch factor in the range $(\mu^2, \sigma^2)$.

Note that if \Cref{conj:minaccum} is true, then it would imply that \Cref{thm:mainthm} and \Cref{cor:maincor} remain true without the `with at least two puncture orbits' hypothesis.

Our current approach is inadequate towards verifying this conjecture because there are infinitely many fully-punctured orientation-preserving pseudo-Anosov maps with normalized stretch factor in the range $(\mu^4, \sigma^4)$. It would take an infinite amount of time to check whether each of these admit an orientation-reversing square root, at least without some new ideas.

Another approach might be to directly generalize the ideas in \cite{Tsa23} to the orientation-reversing case. In \cite{Tsa23}, it is shown that the mapping torus of an orientation-preserving fully-punctured pseudo-Anosov map with normalized stretch factor $\leq \mu^4$ admits a \textit{veering triangulation} with an explicitly bounded number of tetrahedra. Now, veering triangulations can only exist on orientable 3-manifolds, but there is a generalization of the notion of veering triangulations to \textit{veering branched surfaces}, and the latter can exist on non-orientable 3-manifolds. We refer to \cite{Tsa22} for details.

One should be able to generalize the ideas in \cite{Tsa23} to show that the mapping torus of a fully-punctured orientation-reversing pseudo-Anosov map with normalized stretch factor $\leq \sigma^2$ admits a veering branched surface with an explicitly bounded number of triple points. If this number is reasonably small, one could then attempt to generate a list of all veering branched surfaces up to that number of triple points and study the maps that arise as monodromies of fiberings of the corresponding 3-manifolds.

\bibliographystyle{alphaurl}

\bibliography{bib.bib}

\end{document}